\newtheoremstyle{break}
  {\topsep}{\topsep}%
  {\itshape}{}%
  {\bfseries}{}%
  {\newline}{}%
\theoremstyle{break}
\newtheorem{theorem}{Theorem}
\newtheorem{remark}{Remark}
\newtheorem{lemma}{Lemma}
\newtheorem{example}{Example}
\newtheorem{algorithm}{Algorithm}
\newcommand{\vertiii}[1]{{\left\vert\kern-0.25ex\left\vert\kern-0.25ex\left\vert #1
    \right\vert\kern-0.25ex\right\vert\kern-0.25ex\right\vert}}
\begin{document}
\title{Debiased and threshold ridge regression for linear model with heteroskedastic and dependent error}
\author{Yunyi Zhang and Dimitris N. Politis}
\maketitle
\abstract{Focusing on a high dimensional linear model $y = X\beta + \epsilon$ with dependent, non-stationary, and heteroskedastic errors, this paper applies the debiased and threshold ridge regression method that gives a consistent estimator for linear combinations of $\beta$; and derives a Gaussian approximation theorem for the estimator. Besides, it proposes a dependent wild bootstrap algorithm to construct the estimator's confidence intervals and perform hypothesis testing. Numerical experiments on the proposed estimator and the bootstrap algorithm show that they have favorable finite sample performance.

Research on a high dimensional linear model with dependent(non-stationary) errors is sparse, and our work should bring some new insights to this field.}
\section{Introduction}
Linear regression with independent and identically distributed(i.i.d.) errors is a fundamental topic in statistical inference. The classical setting assumes the dimension of parameters in a linear model is constant. Under this setting, research has been proposed on estimation, e.g., Zou and Hastie \cite{doi.org/10.1111/j.1467-9868.2005.00503.x} and Zou \cite{doi:10.1198/016214506000000735}; confidence intervals construction/hypothesis testing, e.g., Chatterjee and Lahiri \cite{doi:10.1198/jasa.2011.tm10159, 10.2307/41059185}; and prediction, e.g., Stine \cite{10.2307/2288570} and Zhang and Politis \cite{zhang2021bootstrap}. We also refer Seber and Lee \cite{seberLee} for an extensive introduction.

In reality, however, errors in a linear model can be dependent or have different distributions. For example,-as Vogelsang \cite{VOGELSANG2012303} and Petersen \cite{10.1093/rfs/hhn053} suggested-, heteroscedasticity, autocorrelation and spatial correlation present in panel data. In this case confidence intervals developed for i.i.d. errors may fail to capture the correct probability. New tools are developed to adapt to the non-i.i.d. errors. Andrews \cite{10.2307/2938229} and Kim and Sun \cite{KIM2011349} considered estimating the ordinary least square estimator's covariance matrix; Kelejian and Prucha \cite{KELEJIAN2007131} and Vogelsang \cite{VOGELSANG2012303} proposed test statistics; Sun and Wang \cite{doi:10.1080/07474938.2021.1874703} and Conley et al. \cite{Conley} worked on inference and hypothesis testing, etc. Despite great success, their works focus on the classical situation, i.e., the dimension of parameters is considered to be fixed and does not change as the sample size increases.

In the modern era, observations may have a comparable or even larger dimension than the number of samples. So the dimension of parameters cannot be considered as fixed. In order to perform consistent estimation, statisticians need to assume the underlying parameters are sparse(i.e., the parameters contain many zeros), and proceed with statistical inference based on this assumption. Lasso is a suitable algorithm for this setting since it conducts an implicit model selection, i.e., zeroing out parameters that are not significant, see Tibshirani \cite{10.2307/2346178}. More recent work includes Zhao and Yu \cite{10.5555/1248547.1248637}, Meinshausen and B\"{u}hlmann \cite{10.1214/009053606000000281} and Meinshausen and Yu \cite{10.1214/07-AOS582} for model selection; Zhang and Zhang \cite{10.2307/24772752}, Zhang and Cheng \cite{doi:10.1080/01621459.2016.1166114} and Chatterjee and Lahiri \cite{10.2307/41059185, doi:10.1198/jasa.2011.tm10159} for statistical inference and hypothesis testing; Greenshtein and Ritov \cite{10.3150/bj/1106314846} for prediction and Zou \cite{doi:10.1198/016214506000000735} for algorithm improvement. We refer B\"{u}hlmann and van de Geer \cite{statforhighdimen} for a comprehensive overview of the Lasso method on high dimensional data set.

Lasso is not the only choice for fitting a high-dimensional linear model. Fan and Li \cite{doi:10.1198/016214501753382273} introduced a new penalty function, called SCAD, that is continuously differentiable and maintains the sparsity of the underlying model. Lee et al. \cite{10.1214/15-AOS1371},  Liu and Yu \cite{liu2013} and Tibshirani et al. \cite{10.1214/17-AOS1584} considered \textit{Post-selection inference}, i.e., performing model selection with Lasso, then fitting ordinary least square regression on the selected parameters. Shao and Deng \cite{shao2012} applied thresholding on ridge regression to recover the sparsity of a linear model; Zhang and Politis \cite{Zhang2020RidgeRR} made a further improvement through debiasing and thresholding. After debiasing and thresholding, the ridge regression estimator had a comparable performance to the complex models like threshold Lasso or post-selection inference. Moreover, the estimator had a closed-form formula, making it easy to derive theoretical guarantees.

This paper focuses on fitting a high dimensional linear model, constructing confidence intervals, and performing hypothesis testing, with the presence of \textit{dependent and heteroskedastic} errors. Since non-i.i.d. errors may appear in fixed dimensional linear models, they may also appear in high dimensional linear models. Unfortunately, performing statistical inference for a high dimensional linear model with dependent and heteroskedastic errors is challenging. Wu and Wu \cite{10.1214/16-EJS1108} proposed an oracle inequality; Han and Tsay \cite{LassoDep} proved the consistency of the Lasso estimator. However, their works relied on a stationary assumption, e.g., definition 1.3.3 in Brockwell and Davis \cite{10.5555/17326}.

Suppose a high dimensional sparse linear model $y = X\beta + \epsilon$ with dependent, non-stationary, heteroskedastic errors $\epsilon = (\epsilon_1,...,\epsilon_n)^T$. Here $\epsilon_i, i = 1,...,n$ are not necessary to be linear processes. This paper focuses on estimating linear combinations of parameters $\gamma = M\beta$ with $M$ a given matrix. We adopt the debiased and threshold ridge regression estimator proposed in Zhang and Politis \cite{Zhang2020RidgeRR}. After selecting a suitable ridge parameter and a threshold, this method has a comparable performance to Lasso and is easily analyzed.

Our work includes proving the model selection consistency as well as the consistency of the estimator; deriving a Gaussian approximation theorem for the estimator and constructing a confidence interval for $\gamma$. We are also interested in testing the statistical hypothesis
\begin{equation}
\text{null: } M\beta = z\ \text{versus the alternative: } M\beta\neq z
\label{test}
\end{equation}
with $z$ a given vector. To achieve this goal, we adopt a dependent wild bootstrap(Shao \cite{doi:10.1198/jasa.2009.tm08744}) and provide its theoretical guarantee. Since there is little research on statistical inference for a high dimensional linear model with non-i.i.d. errors, this paper should shed some light on this field.

The remainder of this paper is organized as follows: section \ref{Prelim} introduces frequently used notations and assumptions. Section \ref{Consistency} presents the consistency results and the Gaussian approximation theorem for the proposed estimator. Section \ref{Bootstrap} constructs a confidence interval for $\gamma = M\beta$, and tests the null hypothesis $\gamma = z$ versus the alternative hypothesis $\gamma\neq z$ via dependent wild bootstrap. Section \ref{numericla} provides numerical experiments, and section \ref{conclusion} makes conclusions. Technical proofs are deferred to the Appendix.
\section{Preliminarily}
\label{Prelim}
Our work considers the fixed design linear model
\begin{equation}
y = X\beta + \epsilon
\label{linearModel}
\end{equation}
where the unknown parameter vector $\beta = (\beta_1,...,\beta_p)^T\in\mathbf{R}^p$, and the $n\times p (p<n)$ \textit{fixed(nonrandom)} design matrix $X = (x_{ij})_{i = 1,...,n,j=1,...,p}$ is assumed to have rank $p$. Denote $y = (y_1,...,y_n)^T$ and the errors $\epsilon = (\epsilon_1,...,\epsilon_n)^T$. From the thin singular value decomposition(theorem 7.3.2 in \cite{matrix}), $X = P\Lambda Q^T$ with $P = (p_{ij})_{i = 1,2,...,n, j =1,2,...,p},Q = (q_{ij})_{i,j = 1,2,...,p}$ respectively being the $n\times p, p\times p$ orthonormal matrix, i.e., $P^TP = Q^TQ = QQ^T = I_p$. Here $I_p$ denotes the $p\times p$ identity matrix. And $\Lambda = diag(\lambda_1,...,\lambda_p)$, $\lambda_i, i =1,2,...,p$ are singular values of $X$. Define the set $\mathcal{N}_b = \left\{i = 1,2,...,p: \vert\beta_i\vert>b\right\}, \forall b>0$. We are interested in constructing the confidence region for $\gamma = M\beta$, and testing the statistical hypothesis \eqref{test}.
Here $M = (m_{ij})_{i = 1,2,...,p_1,j = 1,2,...,p}$ is a given $p_1 \times p$ linear combination matrix, and $z = (z_1,...,z_{p_1})^T$ is the given expected value of $M\beta$.

After choosing a suitable threshold $b$, define $c_{ij}, i = 1,2,...,p_1, j = 1,2,...,p$ as $c_{ij} = \sum_{k\in\mathcal{N}_b}m_{ik}q_{kj}$, and $\mathcal{M} = \left\{i = 1,2,...,p_1: \sum_{j = 1}^p c^2_{ij} > 0\right\}$. Define

\noindent$\tau_{i} = \sqrt{\frac{1}{n} + \sum_{j = 1}^p c^2_{ij}\left(\frac{\lambda_j}{\lambda^2_j + \rho_n} + \frac{\rho_n\lambda_j}{(\lambda^2_j+\rho_n)^2}\right)^2}, i = 1,2,...,p_1$. Define $\Sigma = (\sigma_{ij})_{i,j = 1,2,...,n}$ with $\sigma_{ij} = \mathbf{E}\epsilon_i\epsilon_j$ as the covariance matrix of errors.

To quantify the dependency among random variables, Wu \cite{Wu14150}(also see Wu and Wu \cite{wu2016}) introduced the concept `physical dependence'. However, this concept was designed for stationary random variables. We extend this concept to adapt to non-stationary errors $\epsilon$: suppose $e_i, i =...,-1,0,1,...$ are independent(not necessarily identically distributed) random variables, and $\epsilon_i = g_i(...,e_{i-1}, e_i)$, here $g_i$ are measurable functions, $i =1,2,...,n$. Define $\mathcal{F}_i$ as the $\sigma$-field generated by $...,e_{i-1},e_i$, so $\epsilon_i$ is $\mathcal{F}_i$ measurable. Suppose independent random variables $e_i^\dagger, i = ...,-1,0,1,...$ are independent with $e_j, j = ...,-1,0,1,...$; and $e_i^\dagger$ has the same distribution as $e_i$, $\forall i$. Define $\epsilon_{i,j} = g_i(...,e_{i-j-2}, e_{i-j-1}, e_{i-j}^\dagger, e_{i-j+1},...,e_i)$; and the filter $\mathcal{F}_{i,j}$ as the $\sigma$-field generated by $e_{i-j}, e_{i-j+1},...,e_i$. Here $i\in\mathbf{Z}, j\geq 0$. For a constant $m\geq 1$, define the norm $\Vert\cdot\Vert_m = (\mathbf{E}\vert \cdot\vert^m)^{1/m}$. Then define $\delta_{i,j,m} = \Vert\epsilon_i - \epsilon_{i,j}\Vert_m$.

This paper applies the standard order notations $O(.), o(.), O_p(.)$ and $o_p(.)$: for two numerical sequence $a_n,b_n$, we say $a_n = O(b_n)$ if $\exists$ a constant $C>0$ such that $\vert a_n\vert\leq C\vert b_n\vert$ for all $n$; and $a_n = o(b_n)$ if $\lim_{n\to\infty} a_n/b_n = 0$. For two random variable sequences $X_n, Y_n$, we say $X_n = O_p(Y_n)$ if for any $0<\varepsilon<1$, $\exists $ a constant $C_\varepsilon$ such that $\sup_{n} Prob(\vert X_n\vert\geq C_\varepsilon \vert Y_n\vert)\leq \varepsilon$; and $X_n = o_p(Y_n)$ if $X_n/Y_n \to_p 0$. See definition 1.9 and chapter 1.5.1 of Shao \cite{Mstat}. \textit{All order notations and convergences in this paper are understood to hold as the sample size $n\to\infty$. }

For a set $A$, we use $\vert A\vert$ to indicate the number of elements in $A$. For a vector $a = (a_1,...,a_n)^T$, define its $q$ norm as $\vertiii{a}_q = (\sum_{i = 1}^n \vert a_i\vert^q)^{1/q}$. For a matrix $T$, define(and mainly use) the operator norm $\vertiii{T}_2  = \max_{\vertiii{a}_2 = 1}\vertiii{Ta}_2$. The notation $\to$ and $\to_p$ respectively indicates convergence in $\mathbf{R}$, and convergence in probability. $\exists$ and $\forall$ respectively means `there exists' and `for all'. Define $Prob^*(\cdot) = Prob(\cdot|y)$ and $\mathbf{E}^*\cdot = \mathbf{E}(\cdot|y)$, i.e., the probability and the expectation in the bootstrap world.

This paper uses the following assumptions:

\textbf{Assumptions}

1. The \textbf{fixed} design matrix $X$ has rank $p\leq n$. There exists constants $c_\lambda,\ C_\lambda>0, 1/2\geq \eta>0$ such that
\begin{equation}
C_\lambda n^{1/2}\geq \lambda_1\geq\lambda_2\geq...\geq \lambda_p\geq c_\lambda n^\eta, \forall n
\end{equation}
Besides, $\max_{i =1,...,n, j = 1,...,p}\vert x_{ij}\vert = O(1)$. $p_1 = O(1)$. Here $p_1$ is the number of simultaneous linear combinations in \eqref{test}.

2. $\mathbf{E}\epsilon_i = 0, i = 1,2,...,n$. There exists constants $m > \frac{3}{\eta}$ and $\alpha_\epsilon > 1$ such that
\begin{equation}
\sup_{k = 0, 1,...}(k+1)^{\alpha_\epsilon}\sum_{j = k}^\infty \max_{i = 1,2,...,n}\delta_{i,j,m} = O(1)\ \text{and }  \max_{i = 1,2,...,n}\Vert\epsilon_i\Vert_m = O(1)
\label{cond1}
\end{equation}
Besides, $\exists$  a constant $c_\Sigma > 0$ such that $\Sigma$'s minimum eigenvalue is greater than $c_\Sigma$ for $\forall n$; here $\Sigma = \mathbf{E}\epsilon\epsilon^T$.

3. $\vertiii{\beta}_2 = O(n^{\alpha_\beta})$ with a constant $\alpha_\beta$ such that $0\leq \alpha_\beta < 3\eta$, and $\vert\mathcal{N}_{b_n}\vert = o(n^{\eta-1/m})$.

4. $\rho_n = O(n^{2\eta-\delta})$ with a constant $\delta$ such that $\frac{\eta + \alpha_\beta}{2}<\delta<2\eta$. Besides, there exists constants $C_b > 0$ and $0<\nu_b <\eta -  \frac{3}{m}$ such that $b_n = C_b n^{-\nu_b}$ for $\forall n$. Assume $\exists$ a constant $0<c_b<1$ such that $\max_{i\not\in\mathcal{N}_{b_n}}\vert\beta_i\vert\leq c_b\times b_n$ and $\min_{i\in\mathcal{N}_{b_n}}\vert\beta_i\vert\geq b_n/c_b$.

5. $\mathcal{M}$ is not empty. And $\exists$ constants $0<c_\mathcal{M}\leq C_\mathcal{M}<\infty$ such that $c_{\mathcal{M}}\leq\sum_{k\in\mathcal{N}_{b_n}}m^2_{ik}(=\sum_{j = 1}^pc^2_{ij})\leq C_\mathcal{M} $ for $\forall i\in\mathcal{M}$ and $\forall n$. Also assume

\begin{equation}
\max_{i = 1,...,p_1}\vert\sum_{j\not\in\mathcal{N}_{b_n}}m_{ij}\beta_j\vert = o(1/\sqrt{n}) \text{ and } \max_{i = 1,...,n}\vert\sum_{j\not\in\mathcal{N}_{b_n}}x_{ij}\beta_j\vert = o(n^{\frac{1/m -\eta}{2}})
\end{equation}

6. The matrix $(c_{ij})_{i\in\mathcal{M}, j = 1,2,...,p}$ has rank $\vert\mathcal{M}\vert$ and
\begin{equation}
c^* = \max_{i\in\mathcal{M}, l = 1,2,...,n}\frac{1}{\tau_i}\vert\sum_{j = 1}^p c_{ij}p_{lj}\left(\frac{\lambda_j}{\lambda^2_j + \rho_n} + \frac{\rho_n\lambda_j}{(\lambda^2_j + \rho_n)^2}\right)\vert = o(n^{-1/4}\log^{-z}(n))
\end{equation}
Here $z = \max(\frac{9}{2}, \frac{3\alpha_\epsilon}{2\alpha_\epsilon-2})$.

7. Suppose a function $K:\mathbf{R}\to[0,\infty)$ is symmetric, continuously differentiable, $K(0) = 1$, $\int_{\mathbf{R}} K(x)dx <\infty$, and $K$ is decreasing on $[0,\infty)$. Define the Fourier transformation of $K$ as $\mathcal{F}K(x) = \int_{\mathbf{R}}K(t)\exp(-2\pi \rm{i}tx)dt$. Assume $\mathcal{F}K(x)\geq 0, \forall x\in\mathbf{R}$ and $\int_{\mathbf{R}}\mathcal{F}K(x)dx<\infty$. Define a bandwidth parameter $k_n$ satisfying $\lim_{n\to\infty} k_n = \infty$, $k_n = o(n^{2\eta-2\nu_b-4/m})$, and $k_n = O(n^{\frac{\eta} {2}-\frac {3}{2m}})$.

In assumption 6, $(c_{ij})_{i\in\mathcal{M}, j = 1,2,...,p}$ denotes the sub-matrix formed by selecting  $\mathcal{M}$ rows from the matrix $(c_{ij})_{i = 1,...,p_1, j = 1,...,p}$.

Our work mainly focuses on fixed design, i.e., no randomness involves in the design matrix $X$. In the case of random design, results in this paper hold true after conditioning on the design matrix $X$, i.e., replacing $Prob(\cdot)$ by $Prob(\cdot| X)$;  $\mathbf{E}\cdot$ by $\mathbf{E}\cdot| X$; $Prob^*(\cdot)$ by $Prob(\cdot|X,y)$ and $\mathbf{E}^*\cdot$ by $\mathbf{E}\cdot|X,y$.

Example \ref{examp1} introduces a situation in which assumption 1 is satisfied.

\begin{example}
\label{examp1}
Suppose $n > p$ and $p / n \to c \in (0, 1)$. Choose $X = (x_{ij})_{i = 1,...,n, j = 1,...,p}$ such that each $x_{ij}$ is a realization of i.i.d. random variables with mean $0$, variance $1$, and finite fourth order moment. According to Bai and Yin \cite{10.1214/aop/1176989118}, the smallest eigenvalue of $\frac{1}{n}X^TX$ converges to $(1 - \sqrt{c})^2$ almost surely as $n\to\infty$. So the smallest singular value of $X$, being the square root of $X^TX$'s smallest eigenvalue, is greater than $\frac{1 - \sqrt{c}}{2}\sqrt{n}$ for sufficiently large $n$ almost surely. On the other hand, the largest eigenvalue of $\frac{1}{n}X^TX$ converges to $(1 + \sqrt{c})^2$ as $n\to\infty$. So the largest singular value of $X$ has order $O(\sqrt{n})$ almost surely.
\end{example}

\begin{remark}
1. Under stationary assumptions, e.g., Wu \cite{Wu14150}
\begin{equation}
\delta_{i,j,m} = \Vert\epsilon_0 - \epsilon_{0,j}\Vert_m = \delta_{0,j,m}\Rightarrow \sum_{j = k}^\infty \max_{i = 1,...,n}\delta_{i,j,m} = \sum_{j = k}^\infty\delta_{0,j,m}
\end{equation}
and \eqref{cond1} coincides with (2.8) in Wu and Wu \cite{wu2016}, i.e., the dependence adjusted norm condition. Therefore, assumption 2 can be recognized as the dependence adjusted norm condition for non-stationary random variables.

\noindent 2. According to Shao \cite{doi:10.1198/jasa.2009.tm08744} and Fourier inversion theorem(theorem 8.26 in \cite{RealAnalysis}), $\forall x = (x_1,...,x_n)^T\in\mathbf{R}^n$,
\begin{equation}
\begin{aligned}
\sum_{s = 1}^n\sum_{j = 1}^n x_s x_j K\left(\frac{s - j}{k_n}\right) =  \int_{\mathbf{R}}\sum_{s = 1}^n\sum_{j = 1}^n x_s x_j\mathcal{F}K(z)\exp\left(2\pi \rm{i}z\frac{s-j}{k_n}\right)dz\\
= \int_{\mathbf{R}}FK(z)\vert\sum_{s = 1}^n x_s\exp(\frac{2\pi \rm{i} zs}{k_n})\vert^2dz\geq 0
\end{aligned}
\end{equation}
Therefore, the matrix $\left\{K\left(\frac{s - j}{k_n}\right)\right\}_{s,j = 1,2,...,n}$ is symmetric positive semi-definite. One example of $K$ satisfying assumption 7 is $K(x) = \exp(-x^2/2)\Rightarrow \mathcal{F}K(x)  = \sqrt{2\pi}\exp(-2\pi^2x^2)$.

\noindent 3. Since
\begin{equation}
\sum_{l = 1}^n \left(\sum_{j = 1}^pc_{ij}p_{lj}\left(\frac{\lambda_j}{\lambda_j^2 + \rho_n} + \frac{\rho_n\lambda_j}{(\lambda^2_j + \rho_n)^2}\right)\right)^2 = \sum_{j = 1}^p c^2_{ij}\left(\frac{\lambda_j}{\lambda^2_j + \rho_n} + \frac{\rho_n\lambda_j}{(\lambda^2_j + \rho_n)^2}\right)^2 < \tau_i
\end{equation}
Assumption 6 requires no single element in the array $\left\{\sum_{j = 1}^pc_{ij}p_{lj}\left(\frac{\lambda_j}{\lambda_j^2 + \rho_n} + \frac{\rho_n\lambda_j}{(\lambda^2_j + \rho_n)^2}\right)\right\}_{l = 1,...,n}$ dominates others. We add $1/n$ in $\tau_i$ to prevent the normalizing parameters from being $0$.

\noindent 4. Assumption 2 implies a polynomial decay of $\epsilon$'s covariance, i.e., $\max_{1\leq i < j\leq n}(1 + j-i)^{\alpha_\epsilon}\times \vert\mathbf{E}\epsilon_i\epsilon_j\vert = O(1)$. We will prove this in appendix \ref{UsefulRes}. Panel data may require different types of dependency, e.g., clustered dependency or user-defined spatial dependency(Vogelsang \cite{VOGELSANG2012303}). Our work needs to be adjusted if those dependencies show up.
\end{remark}
\section{Consistency and Gaussian approximation theorem}
\label{Consistency}

This paper applies the ridge regression estimator introduced in Zhang and Politis \cite{Zhang2020RidgeRR}. Use notations in section \ref{Prelim}. Suppose the classical ridge regression estimator $\widetilde{\beta}^\star$ and the de-biased estimator $\widetilde{\beta}$ as
\begin{equation}
\begin{aligned}
\widetilde{\beta}^\star = (X^TX + \rho_n I_p)^{-1}X^Ty\\
\widetilde{\beta} = \widetilde{\beta}^\star + \rho_nQ(\Lambda^2 + \rho_n I_p)^{-1}Q^T\widetilde{\beta}^\star
\end{aligned}
\end{equation}
Define $\widehat{\mathcal{N}}_{b_n} = \left\{i = 1,2,...,p: \vert\widetilde{\beta}_i\vert > b_n\right\}$, here $b_n>0$ is a given number(related to the sample size $n$). Define $\widehat{\beta} = (\widehat{\beta}_1,...,\widehat{\beta}_p)^T$ such that $\widehat{\beta}_i = \widetilde{\beta}_i \times \mathbf{1}_{i\in\widehat{\mathcal{N}}_{b_n}}, i = 1,2,...,p$; and $\widehat{\gamma} = (\widehat{\gamma}_1,...,\widehat{\gamma}_{p_1})^T=M\widehat{\beta}$. $\mathbf{1}_{i\in\widehat{\mathcal{N}}_{b_n}} = 1$ if $i\in\widehat{\mathcal{N}}_{b_n}$ and $0$ otherwise.  This paper will use $\widehat{\gamma}$ to estimate $\gamma = (\gamma_1,...\gamma_{p_1})^T= M\beta$.

\begin{remark}
Remark 2 in Zhang and Politis \cite{Zhang2020RidgeRR} explained why $\widetilde{\beta}$ helps decrease the bias as well as the  estimation error. Generally speaking, the bias of a classical ridge regression estimator $\widetilde{\beta}^\star$ is $-\rho_n Q(\Lambda^2 + \rho_n I_p)^{-1}Q^T\beta$. We can estimate the bias by $-\rho_n Q(\Lambda^2 + \rho_n I_p)^{-1}Q^T\widetilde{\beta}^\star$, then subtract the estimated bias from $\widetilde{\beta}^\star$, yielding the new estimator $\widetilde{\beta}$. Compared to $\widetilde{\beta}^\star$, $\widetilde{\beta}$ increases the variance but decreases the bias. So with a suitable choice of $\rho_n$, the estimation error will decrease.

The consistency of $\widetilde{\beta}$ is not a free lunch. We need $\vertiii{\beta}_2$ not to be very large(which is achieved if $\beta$ is sparse) and $\rho_n/\lambda_p^2\to 0$ to maintain consistency.
\end{remark}

The consistency of $\widetilde{\beta}$ consists of two aspects: the model selection consistency, i.e., $\widehat{\mathcal{N}}_{b_n} = \mathcal{N}_{b_n}$(defined in section \ref{Prelim}) with probability tending to $1$. And the consistency of $\widehat{\gamma}$, i.e., $\max_{i = 1,...,p_1}\vert\widehat{\gamma}_i - \gamma_i\vert = o_p(1)$. Theorem \ref{thm1} will prove both consistencies. Another important result in theorem \ref{thm1} is that the estimated errors $\widehat{\epsilon} = y - X\widehat{\beta}$ are close to the underlying errors $\epsilon$. This result is the theoretical foundation for bootstrap algorithm \ref{alg1}.

\begin{theorem}
Suppose assumption 1 to 5. Then

1.
\begin{equation}
Prob\left(\widehat{\mathcal{N}}_{b_n}\neq \mathcal{N}_{b_n}\right) = o(1)\ \text{and }\max_{i = 1,2,...,p_1}\vert\widehat{\gamma}_i - \gamma_i\vert = O_p(n^{-\eta})
\end{equation}

2. Define $\widehat{\epsilon} = (\widehat{\epsilon}_1,...,\widehat{\epsilon}_n)^T$ such that $\widehat{\epsilon}_i = y_i - \sum_{j = 1}^px_{ij}\widehat{\beta}_j$, then
\begin{equation}
\max_{i = 1,2,...,n}\vert\widehat{\epsilon}_i - \epsilon_i\vert = o_p(n^{1/(2m) - \eta/2})
\label{ConsHalf}
\end{equation}
\label{thm1}
\end{theorem}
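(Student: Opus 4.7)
The heart of the argument is an explicit bias--variance decomposition
\begin{equation*}
\widetilde{\beta} - \beta = -\rho_n^2\, Q(\Lambda^2 + \rho_n I_p)^{-2} Q^T \beta \;+\; Q D P^T \epsilon,
\end{equation*}
obtained by substituting $y = X\beta + \epsilon$ and $X = P\Lambda Q^T$ into the formula for $\widetilde{\beta}^\star$ and then applying the debiasing factor $I_p + \rho_n Q(\Lambda^2 + \rho_n I_p)^{-1} Q^T$. Here $D = \mathrm{diag}(d_j)$ with $d_j = \lambda_j/(\lambda_j^2+\rho_n) + \rho_n\lambda_j/(\lambda_j^2+\rho_n)^2$. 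Assumptions 1 and 4 give $\max_j d_j = O(n^{-\eta})$ and $\rho_n^2/\lambda_p^4 = O(n^{-2\delta})$, so the bias vector has $\ell_2$-norm $O(n^{\alpha_\beta - 2\delta}) = o(n^{-\eta})$ thanks to $2\delta > \eta + \alpha_\beta$.

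For the model-selection claim I would bound $\max_i |\widetilde{\beta}_i - \beta_i|$ coordinate-wise. The bias piece is absorbed into its $\ell_2$ norm and is $o(n^{-\eta})$. For the stochastic piece $[QDP^T\epsilon]_i = \sum_l a_{il}\epsilon_l$ orthogonality gives $\sum_l a_{il}^2 = \sum_j q_{ij}^2 d_j^2 = O(n^{-2\eta})$; invoking a Rosenthal/Burkholder-type moment inequality for weighted sums of physically dependent variables, which is the natural output of assumption 2 in Wu's framework, yields $\|\sum_l a_{il}\epsilon_l\|_m = O(n^{-\eta})$. A Markov union bound over the $p\le n$ coordinates then gives $\max_i |[QDP^T\epsilon]_i| = O_p(n^{-\eta+1/m}) = o(b_n)$, since $\nu_b < \eta - 3/m$ by assumption 4. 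Combined with the gap condition $\min_{i\in\mathcal{N}_{b_n}}|\beta_i| \ge b_n/c_b$ and $\max_{i\notin\mathcal{N}_{b_n}}|\beta_i|\le c_b b_n$, this forces $\widehat{\mathcal{N}}_{b_n} = \mathcal{N}_{b_n}$ with probability $1 - o(1)$.

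On that event $\widehat{\gamma}_i - \gamma_i = \sum_{j\in\mathcal{N}_{b_n}} m_{ij}(\widetilde{\beta}_j - \beta_j) - \sum_{j\notin\mathcal{N}_{b_n}} m_{ij}\beta_j$. The identity $\sum_{j\in\mathcal{N}_{b_n}} m_{ij}[Qu]_j = \sum_k c_{ik}u_k$, which is the defining relation for $c_{ik}$, rewrites the first sum as $-\rho_n^2 \sum_k c_{ik}(Q^T\beta)_k/(\lambda_k^2+\rho_n)^2 + \sum_l(\sum_k c_{ik} d_k p_{lk})\epsilon_l$. Assumption 5's bound $\sum_k c_{ik}^2 \le C_\mathcal{M}$ makes the deterministic piece $o(n^{-\eta})$ and the stochastic piece have weights with $\ell_2$-norm $O(n^{-\eta})$, hence $m$-th moment $O(n^{-\eta})$; the union bound over $i$ is harmless because $p_1 = O(1)$. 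The unselected-coefficient remainder is $o(1/\sqrt{n}) = o(n^{-\eta})$ by assumption 5. Part 2 is the same calculation with row weights $(x_{ij})$ in place of $(m_{ij})$: the analogous quantities $c'_{ik} = \sum_{k'\in\mathcal{N}_{b_n}} x_{ik'}q_{k'k}$ satisfy $\sum_k (c'_{ik})^2 = \sum_{k'\in\mathcal{N}_{b_n}} x_{ik'}^2 = o(n^{\eta-1/m})$ by assumption 3, which through Cauchy--Schwarz makes the bias $o(n^{1/(2m)-\eta/2})$ and through the weighted-sum moment inequality makes the stochastic piece have $m$-th moment $o(n^{-m\eta/2 - 1/2})$ per coordinate; a union bound over $i = 1,\dots,n$ then delivers $o_p(n^{1/(2m)-\eta/2})$, and the second clause of assumption 5 handles the unselected-coefficient remainder.

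The main obstacle, and really the only non-bookkeeping step, is the concentration ingredient: turning assumption 2, a dependence-adjusted-norm condition for a non-stationary sequence, into a Rosenthal-type $m$-th moment inequality $\|\sum_l w_l\epsilon_l\|_m \lesssim \|w\|_2$ with a universal constant depending only on the dependence-adjusted norm. I would develop this once as a self-contained lemma in the appendix, extending the stationary Wu--Wu bound, and then invoke it with three weight vectors: $(a_{il})_l$ for model selection, $(\sum_k c_{ik}d_k p_{lk})_l$ for $\widehat{\gamma}$, and $(\sum_k c'_{ik}d_k p_{lk})_l$ for $\widehat{\epsilon}$.
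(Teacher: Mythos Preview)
Your proposal is correct and follows essentially the same route as the paper. The paper establishes precisely the Rosenthal/Burkholder-type bound $\|\sum_l w_l\epsilon_l\|_m \lesssim \|w\|_2$ you anticipate (its equation \eqref{LinearComb}), packages the Markov-union-bound step as a separate lemma giving $\max_{i\le k}|\sum_l \gamma_{il}\epsilon_l| = O_p(k^{1/m}D)$, and then invokes it with exactly the three weight families you list; the bias-variance decomposition, the orthogonality identities $\sum_l a_{il}^2 = \sum_j q_{ij}^2 d_j^2$ and $\sum_k (c'_{ik})^2 = \sum_{j\in\mathcal{N}_{b_n}} x_{ij}^2$, and the use of the gap condition from assumption~4 are all identical to what you propose.
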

\eqref{ConsHalf} is not self-evident for a high dimensional linear model. We refer Mammen \cite{10.1214/aos/1033066211} for a detailed explanation.

If the dimension of a parameter vector changes according to the sample size, then the estimator $\widehat{\beta}$ does not have an asymptotic distribution, and the classical central limit theorem fails. Nevertheless, statisticians still can use normal random variables to approximate $\widehat{\beta}$'s distribution. In reality, joint normal random variables can be generated easily by a computer. So statisticians can perform Monte-Carlo simulations to derive the asymptotic probability of an event or make a consistent confidence interval. This idea is common for high dimensional statistics, see e.g., Chernozhukov et al. \cite{chernozhukov2013} and Zhang and Wu \cite{10.1214/16-AOS1512}.

Following this idea, theorem \ref{Central} presents a Gaussian approximation theorem of $\widehat{\gamma}$. Define $\xi = (\xi_1,...,\xi_n)^T$ as joint normal random variables with $\mathbf{E}\xi = 0, \mathbf{E}\xi\xi^T = \Sigma$(the covariance matrix of errors $\epsilon$), and
\begin{equation}
H(x) = Prob\left(\max_{i\in\mathcal{M}}\frac{1}{\tau_i}\vert\sum_{j = 1}^p\sum_{l = 1}^nc_{ij}\left(\frac{\lambda_j}{\lambda^2_j + \rho_n} + \frac{\rho_n\lambda_j}{(\lambda^2_j + \rho_n)^2}\right)p_{lj}\xi_l\vert\leq x\right),\ x\in\mathbf{R}
\label{DefH}
\end{equation}
See section \ref{Prelim} for the meaning of notations. $\tau_i$ and $\mathcal{M}$ are defined by choosing $b = b_n$. According to assumption 5, $\mathcal{M}$ is not empty. So $H$ is well-defined.
\begin{theorem}
Suppose assumption 1 to 6. Then
\begin{equation}
\sup_{x\in\mathbf{R}}\vert Prob\left(\max_{i = 1,...,p_1}\frac{\vert\widehat{\gamma}_i - \sum_{j = 1}^p m_{ij}\beta_j\vert}{\widehat{\tau}_i}\leq x\right) - H(x)\vert = o(1)
\label{Cent}
\end{equation}
Here $\widehat{\tau}_i = \sqrt{\frac{1}{n} + \sum_{j = 1}^p \widehat{c}^2_{ij}\left(\frac{\lambda_j}{\lambda^2_j + \rho_n} + \frac{\rho_n\lambda_j}{(\lambda^2_j+\rho_n)^2}\right)^2}$ and $\widehat{c}_{ij} = \sum_{k\in\widehat{\mathcal{N}}_{b_n}}m_{ik}q_{kj}$; $i = 1,2,...,p_1, j = 1,2,...,p$.
\label{Central}
\end{theorem}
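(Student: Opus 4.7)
The argument has three components: a reduction via model-selection consistency, an algebraic decomposition of the estimator, and a Gaussian approximation. By Theorem \ref{thm1} the event $\mathcal{E}_n=\{\widehat{\mathcal{N}}_{b_n}=\mathcal{N}_{b_n}\}$ has probability $1-o(1)$; on this event $\widehat{c}_{ij}=c_{ij}$ and $\widehat{\tau}_i=\tau_i$, so the normalizing constants become deterministic. Using $X=P\Lambda Q^T$ one computes
\begin{equation}
\widetilde{\beta}-\beta = -\rho_n^2\, Q(\Lambda^2+\rho_n I_p)^{-2}Q^T\beta + Q D P^T\epsilon,
\end{equation}
with $D=\mathrm{diag}(d_j)$ and $d_j = \lambda_j/(\lambda_j^2+\rho_n)+\rho_n\lambda_j/(\lambda_j^2+\rho_n)^2$. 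On $\mathcal{E}_n$, this expands $\widehat{\gamma}_i-\sum_j m_{ij}\beta_j$ as $L_i+B_i+R_i$, where $L_i=\sum_{l=1}^n\bigl(\sum_{j=1}^p c_{ij}d_j p_{lj}\bigr)\epsilon_l$ is exactly the stochastic linear form appearing inside $H(\cdot)$, $B_i=-\rho_n^2\sum_j c_{ij}(Q^T\beta)_j/(\lambda_j^2+\rho_n)^2$ is the residual debiasing bias, and $R_i=-\sum_{j\notin\mathcal{N}_{b_n}}m_{ij}\beta_j$ is the model-truncation error.

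Indices $i\notin\mathcal{M}$ satisfy $c_{ij}=0$ for every $j$ (equivalently $m_{ik}=0$ for $k\in\mathcal{N}_{b_n}$ since $Q$ is orthogonal), which on $\mathcal{E}_n$ forces $|\widehat{\gamma}_i-\sum_j m_{ij}\beta_j|/\widehat{\tau}_i = \sqrt{n}\,|R_i|=o(1)$ by assumption 5, so only $i\in\mathcal{M}$ contribute to the maximum. For $i\in\mathcal{M}$, Cauchy--Schwarz combined with $\vertiii{\beta}_2=O(n^{\alpha_\beta})$, $\lambda_p\geq c_\lambda n^\eta$, $\rho_n=O(n^{2\eta-\delta})$ and $\delta>(\eta+\alpha_\beta)/2$ delivers $\max_{i\in\mathcal{M}}|B_i|/\tau_i=o(1)$ at a polynomial rate, while $\max_i|R_i|/\tau_i=o(1)$ follows from assumption 5. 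Sharpening these bounds to $o(\log^{-1/2}n)$ will allow them to be absorbed by anti-concentration, reducing the theorem to the Gaussian approximation
\begin{equation}
\sup_{x\in\mathbf{R}}\bigl|Prob\bigl(\max_{i\in\mathcal{M}}|L_i|/\tau_i\le x\bigr) - H(x)\bigr| = o(1).
\end{equation}

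Writing $L_i/\tau_i=\sum_l a_{il}\epsilon_l$ with $a_{il}=\tau_i^{-1}\sum_j c_{ij}d_j p_{lj}$, assumption 6 produces the Lindeberg-type smallness $\max_{i,l}|a_{il}|=c^*=o(n^{-1/4}\log^{-z}(n))$, ensuring no single $\epsilon_l$ dominates any linear statistic. Assumption 2 supplies the dependence adjusted norm condition of Wu and Wu \cite{wu2016} in the uniform form $\sum_{j\geq k}\max_i\delta_{i,j,m}=O((k+1)^{-\alpha_\epsilon})$ appropriate for non-stationary sequences, and $|\mathcal{M}|\leq p_1=O(1)$ makes the index set bounded. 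These are the hypotheses under which a Gaussian approximation theorem for maxima of linear forms in physically dependent sequences, such as Zhang and Wu \cite{10.1214/16-AOS1512}, can be applied: the exponent $z=\max(9/2,\,3\alpha_\epsilon/(2\alpha_\epsilon-2))$ is exactly the log-power appearing in their Kolmogorov-distance rate under polynomial decay of order $\alpha_\epsilon$. A Nazarov-type anti-concentration bound on $\max_{i\in\mathcal{M}}|\sum_l a_{il}\xi_l|$ then absorbs the $o(\log^{-1/2}n)$ perturbation from $B_i+R_i$, yielding \eqref{Cent}.

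The main obstacle is unquestionably the Gaussian approximation step: the Zhang--Wu theorem is stated for stationary sequences, and here it must be adapted to the non-stationary setting via the uniform dependence norm of assumption 2 while tracking the exact log-exponent $z$. Everything else---the closed-form decomposition, the bounds on $B_i$ and $R_i$, and the anti-concentration step---is essentially routine once the Gaussian approximation is in hand.
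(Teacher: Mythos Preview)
Your proposal is correct and follows essentially the same route as the paper: the same decomposition on the event $\widehat{\mathcal{N}}_{b_n}=\mathcal{N}_{b_n}$, the same Cauchy--Schwarz bound $\max_{i\in\mathcal{M}}|B_i|/\tau_i\leq \rho_n^2\vertiii{\beta}_2/\lambda_p^3=O(n^{\alpha_\beta+\eta-2\delta})$, the same observation that indices $i\notin\mathcal{M}$ drop out, and the same reduction via anti-concentration (the paper's Lemma~\ref{GaussianRes}) to a Gaussian approximation for $\max_{i\in\mathcal{M}}\bigl|\sum_l v_{il}\epsilon_l\bigr|$. The one difference is exactly the step you flagged as the main obstacle: rather than adapting the stationary Zhang--Wu theorem, the paper proves a self-contained Gaussian approximation lemma (Lemma~\ref{GaussianApprox}) for non-stationary $\epsilon$ directly, via a big-block/small-block Lindeberg swap with Burkholder-type martingale control under the uniform dependence norm of assumption~2; carrying this out from scratch is how the paper resolves the obstacle, and the exponent $z=\max(9/2,\,3\alpha_\epsilon/(2\alpha_\epsilon-2))$ emerges from balancing the block sizes and smoothing parameters in that argument.
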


Define $c_{1-\alpha}$ as the $1-\alpha$ quantile of $H(x)$, i.e., $c_{1-\alpha} = \inf\left\{x\in\mathbf{R}: H(x)\geq 1-\alpha\right\}$. Theorem \ref{Central} implies that the set
\begin{equation}
\left\{\gamma\in\mathbf{R}^{p_1}:\max_{i = 1,...,p_1}\frac{\vert\widehat{\gamma}_i - \gamma_i\vert}{\widehat{\tau}_i}\leq c_{1-\alpha}\right\}
\label{conf}
\end{equation}
is an asymptotic consistent confidence region for $\gamma$.

\section{Dependent wild bootstrap}
\label{Bootstrap}
In order to calculate \eqref{conf}, we need a method that derives(or approximates) $c_{1-\alpha}$. Since $\mathcal{M}, \mathcal{N}_{b_n}$, and $\Sigma$ are unknown a priori and $H$ does not have a closed-form formula, we develop a Monte-Carlo algorithm(algorithm \ref{alg1}) that finds $c_{1-\alpha}$. The advantage of algorithm \ref{alg1} is that it avoids explicitly estimating $\Sigma$, which is hard for non-stationary random variables $\epsilon$. Our method applies the idea of \textit{the dependent wild bootstrap}. It was first introduced by Shao \cite{doi:10.1198/jasa.2009.tm08744}, and has been used in linear regression (Conley et al. \cite{Zhang}), time series analysis (Fragkeskou and Paparoditis \cite{https://doi.org/10.1111/jtsa.12275}, Shao \cite{doi:10.1198/jasa.2009.tm08744} and Doukhan et al. \cite{https://doi.org/10.1111/jtsa.12106}), high dimensional data analysis (Kurisu et al. \cite{shao}), etc.

Our target is to construct the confidence interval for $\gamma = M\beta$. And Perform the statistical hypothesis testing for the null hypothesis $M\beta = z$ with $z = (z_1,...,z_{p_1})^T$ a known vector versus the alternative hypothesis $M\beta\neq z$. Notably, this test problem receives lots of attentions in econometrics, e.g., Gon\c{c}alves and Vogelsang \cite{goncalves_vogelsang_2011}, Sun and Kim \cite{Suns}, and Conley et al. \cite{Conley}.

\begin{algorithm}[Bootstrap inference and hypothesis testing]
\textbf{Input: }Design matrix $X$ and dependent variables $y$ satisfying $y=X\beta + \epsilon$; the linear combination matrix $M$. Ridge parameter $\rho_n$, threshold $b_n$, bandwidth $k_n$, a kernel function $K:\mathbf{R}\to[0,\infty)$. Nominal coverage probability $1-\alpha$, number of bootstrap replicates $B$.

\noindent\textbf{Additional input for testing: }$z = (z_1,...,z_{p_1})^T$.

1. Derive the estimator $\widehat{\beta}$ defined in section \ref{Consistency} and $\widehat{\tau}_i, i = 1,...,n$ defined in theorem \ref{Central}. Then calculate $\widehat{\gamma} = (\widehat{\gamma}_1,...,\widehat{\gamma}_{p_1})^T = M\widehat{\beta},\ \widehat{\epsilon} = (\widehat{\epsilon}_1,...,\widehat{\epsilon}_n)^T = y-X\widehat{\beta}$.

2. Generate joint normal random variables $\varepsilon^*_1,...,\varepsilon^*_n$ with mean $0$ and covariance matrix $\left\{K\left(\frac{i-j}{k_n}\right)\right\}_{i,j=1,...,n}$. Then define $\epsilon^* = (\epsilon^*_1,...,\epsilon^*_n)^T$ such that $\epsilon^*_i = \widehat{\epsilon}_i\times \varepsilon^*_i, i = 1,2,...,n$.

3. Calculate
\begin{equation}
\begin{aligned}
y^* = X\widehat{\beta} + \epsilon^*\\
\widetilde{\beta}^{\star*} = (X^TX + \rho_n I_p)^{-1}X^Ty^*\\
\widetilde{\beta}^*= (\widetilde{\beta}^*_1,...,\widetilde{\beta}^*_p)^T  = \widetilde{\beta}^{\star*} + \rho_nQ(\Lambda^2 + \rho_n I_p)^{-1}Q^T\widetilde{\beta}^{\star*}\\
\widehat{\mathcal{N}}_{b_n}^* = \left\{i=1,2,...,p: \vert\widetilde{\beta}_i^*\vert>b_n\right\}\\
\text{and }\widehat{\beta}^* = (\widehat{\beta}_1^*,...,\widehat{\beta}^*_p)\ \text{such that }\widehat{\beta}^*_i = \widetilde{\beta}^*_i\times \mathbf{1}_{i\in\widehat{\mathcal{N}}_{b_n}^*}
\end{aligned}
\label{calBoot}
\end{equation}

4. Derive $\widehat{\gamma}^* = (\widehat{\gamma}^*_1,...,\widehat{\gamma}^*_{p_1})^T = M\widehat{\beta}^*$,   $\widehat{c}_{ij}^* = \sum_{k\in\widehat{\mathcal{N}}_{b_n}^*}m_{ik}q_{kj}$ and

\noindent $\widehat{\tau}_{i}^* = \sqrt{\frac{1}{n} + \sum_{j = 1}^p \widehat{c}^{*2}_{ij}\left(\frac{\lambda_j}{\lambda^2_j + \rho_n} + \frac{\rho_n\lambda_j}{(\lambda^2_j+\rho_n)^2}\right)^2}$. Then calculate
\begin{equation}
\delta^*_b = \max_{i = 1,2,...,p_1}\frac{\vert\widehat{\gamma}^*_i - \widehat{\gamma}_i\vert}{\widehat{\tau}^*_i}
\end{equation}

5. Repeat step 2 to 4 for $b = 1,...,B$, and calculate the $1-\alpha$ sample quantile $\widehat{c}^*_{1-\alpha}$ of $\delta^*_b, b =1,2,...,B$.

6.a(For constructing the confidence region) The $1-\alpha$ confidence region for the parameter of interest $\gamma =M\beta$ is given by the set
\begin{equation}
\left\{\gamma = (\gamma_1,...,\gamma_{p_1})^T: \max_{i = 1,...,p_1}\frac{\vert\widehat{\gamma}_i - \gamma_i\vert}{\widehat{\tau}_i}\leq \widehat{c}^*_{1-\alpha}\right\}
\end{equation}

6.b(For hypothesis testing) Reject the null hypothesis if
\begin{equation}
\max_{i =1,...,p_1}\frac{\vert\widehat{\gamma}_i - z_i\vert}{\widehat{\tau}_i} > \widehat{c}^*_{1-\alpha}
\end{equation}
\label{alg1}
\end{algorithm}
\textit{For a numerical sequence $\delta_1\leq\delta_2\leq...\leq\delta_n$, the $1 - \alpha$ sample quantile $C_{1-\alpha}$ is defined as}
\begin{equation}
C_{1-\alpha} = \delta_{i*} \text{  such that  } i* = \min\left\{i = 1,...,n: \frac{1}{n}\sum_{j = 1}^n \mathbf{1}_{\delta_j\leq \delta_{i}}\geq 1- \alpha\right\}
\end{equation}

\noindent Define the conditional quantile of $\delta^*_b, b = 1,...,B$ as
\begin{equation}
c^*_{1-\alpha} = \inf\left\{x\in\mathbf{R}: Prob^*\left(\max_{i = 1,...,p_1}\frac{\vert\widehat{\gamma}^*_i-\widehat{\gamma}_i\vert}{\widehat{\tau}^*_i}\leq x\right)\geq 1-\alpha\right\},\ 0<\alpha<1
\end{equation}
According to theorem 1.2.1 in Politis et al. \cite{subsampling}, the sample quantile $\widehat{c}^*_{1-\alpha}$ converges to $c^*_{1-\alpha}$ as $B\to\infty$. So consistency of algorithm \ref{alg1} is assured by showing

\noindent$Prob\left(\max_{i =1,...,p_1}\frac{\vert\widehat{\gamma}_i - \gamma_i\vert}{\widehat{\tau}_i} \leq c^*_{1-\alpha}\right)\to 1-\alpha$. We derive this result in theorem \ref{thmBootS}.

\begin{theorem}
Suppose assumption 1 to 7. Then
\begin{equation}
\begin{aligned}
\sup_{x\in\mathbf{R}}\vert Prob^*\left(\max_{i = 1,...,p_1}\frac{\vert\widehat{\gamma}^*_i-\widehat{\gamma}_i\vert}{\widehat{\tau}^*_i}\leq x\right) - H(x)\vert = o_p(1)\\
\text{and }Prob\left(\max_{i =1,...,p_1}\frac{\vert\widehat{\gamma}_i -\gamma_i\vert}{\widehat{\tau}_i} \leq c^*_{1-\alpha}\right)\to 1-\alpha
\end{aligned}
\label{thmBoot}
\end{equation}
Here $H$ is defined in \eqref{DefH}.
\label{thmBootS}
\end{theorem}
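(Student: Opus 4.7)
The plan is to reduce the first display of \eqref{thmBoot} to a covariance matrix approximation problem, and then deduce the quantile statement from the continuity of $H$. Conditional on $y$, the bootstrap error $\epsilon^* = D_{\widehat{\epsilon}}\varepsilon^*$ with $D_{\widehat{\epsilon}}=\mathrm{diag}(\widehat{\epsilon}_1,\ldots,\widehat{\epsilon}_n)$ is a centred Gaussian vector with covariance $\Sigma^* = D_{\widehat{\epsilon}}K_{k_n}D_{\widehat{\epsilon}}$, where $K_{k_n} := \{K((i-j)/k_n)\}$ is positive semidefinite by remark 2. Thus the bootstrap statistic is, conditionally on $y$, the maximum of a Gaussian vector, as is $H$ under the law of $\xi$; since $|\mathcal{M}|\le p_1=O(1)$, the Kolmogorov distance between these two maxima will be driven by a covariance comparison rather than by a high-dimensional Gaussian approximation. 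I would first re-run the machinery behind Theorem~\ref{thm1} in the bootstrap world, with ``true parameter'' $\widehat{\beta}$ (which satisfies the separation required by assumption 4 on the event $\widehat{\mathcal{N}}_{b_n}=\mathcal{N}_{b_n}$ secured by Theorem~\ref{thm1}) and errors $\epsilon^*$. The Gaussianity of $\epsilon^*$, combined with the bandwidth condition $k_n = O(n^{\eta/2-3/(2m)})$ in assumption 7, makes the conditional moment and dependence bounds needed for the Theorem~\ref{thm1} proof hold with $\mathrm{Prob}^*$-probability $1-o_p(1)$, yielding $\widehat{\mathcal{N}}^*_{b_n}=\mathcal{N}_{b_n}$ conditionally. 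For $i\notin\mathcal{M}$ the rows $(m_{ik})_{k\in\mathcal{N}_{b_n}}$ vanish because $Q$ has full rank, so both the real and bootstrap maxima reduce to $i\in\mathcal{M}$.

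Mirroring the algebra behind \eqref{Cent}, I would next expand, for $i\in\mathcal{M}$,
\[
\widehat{\gamma}^*_i-\widehat{\gamma}_i = \sum_{j=1}^{p}\sum_{l=1}^{n}c_{ij}\Bigl(\tfrac{\lambda_j}{\lambda_j^2+\rho_n}+\tfrac{\rho_n\lambda_j}{(\lambda_j^2+\rho_n)^2}\Bigr)p_{lj}\,\epsilon^*_l + R^*_i,
\]
and verify $\max_{i\in\mathcal{M}}|R^*_i|/\widehat{\tau}^*_i = o_p(1)$ and $\widehat{\tau}^*_i/\tau_i \to_p 1$, using the Gaussianity of $\epsilon^*$, the bias control in assumption 5, and the bootstrap model-selection consistency above. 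Dropping $R^*_i$, the conditional law of $(T^*_i/\widehat{\tau}^*_i)_{i\in\mathcal{M}}$ is an explicit centred Gaussian of fixed dimension, and $H$ is the law of the maximum of another such Gaussian. A Slepian- or Chernozhukov--Chetverikov--Kato-type comparison (whose dimension-dependent logarithmic factor is $O(1)$ here because $|\mathcal{M}|\le p_1=O(1)$) bounds the Kolmogorov distance between the two maxima by a power of
\[
\Delta_n := \max_{i,i'\in\mathcal{M}}\Bigl|\tfrac{1}{\widehat{\tau}^*_i\widehat{\tau}^*_{i'}}\sum_{l,l'=1}^{n}a_l^{(i)}a_{l'}^{(i')}\widehat{\epsilon}_l\widehat{\epsilon}_{l'}K\!\left(\tfrac{l-l'}{k_n}\right) - \tfrac{1}{\tau_i\tau_{i'}}\sum_{l,l'=1}^{n}a_l^{(i)}a_{l'}^{(i')}\sigma_{ll'}\Bigr|,
\]
where $a_l^{(i)} := \sum_{j=1}^p c_{ij}\bigl(\tfrac{\lambda_j}{\lambda_j^2+\rho_n}+\tfrac{\rho_n\lambda_j}{(\lambda_j^2+\rho_n)^2}\bigr)p_{lj}$. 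The first line of \eqref{thmBoot} thus reduces to $\Delta_n = o_p(1)$.

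Using $\widehat{\tau}^*_i/\tau_i \to_p 1$, the remaining task is to show that $\sum_{l,l'}a_l^{(i)}a_{l'}^{(i')}\widehat{\epsilon}_l\widehat{\epsilon}_{l'}K((l-l')/k_n)$ approximates $\sum_{l,l'}a_l^{(i)}a_{l'}^{(i')}\sigma_{ll'}$. I would split this error into (a) the residual-substitution error, controlled by $\max_l|\widehat{\epsilon}_l-\epsilon_l|=o_p(n^{1/(2m)-\eta/2})$ from Theorem~\ref{thm1} together with $c^* = o(n^{-1/4}\log^{-z}n)$ from assumption 6 and $\|\epsilon_l\|_m=O(1)$; and (b) the pure kernel HAC error for $\sum a_l a_{l'}\epsilon_l\epsilon_{l'}K((l-l')/k_n)$ versus $\sum a_l a_{l'}\sigma_{ll'}$, handled by the polynomial covariance decay $|\sigma_{ll'}|\le C(1+|l-l'|)^{-\alpha_\epsilon}$ from remark 4, the normalisation $K(0)=1$, and the bandwidth window $k_n\to\infty$, $k_n=o(n^{2\eta-2\nu_b-4/m})$, $k_n = O(n^{\eta/2-3/(2m)})$ from assumption 7. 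Once $\Delta_n=o_p(1)$ is secured, Gaussian anti-concentration (the variances of the limiting Gaussian coordinates are bounded below via $c_\Sigma$ and $c_\mathcal{M}$) renders $H$ continuous, so the conditional quantile $c^*_{1-\alpha}$ converges in probability to the $(1-\alpha)$-quantile of $H$; combining with Theorem~\ref{Central} then yields the second line of \eqref{thmBoot}.

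The main obstacle is the uniform kernel HAC estimate of the previous paragraph. The interplay among plug-in residuals, kernel weighting, and the $n$-dimensional weight array $\{a_l^{(i)}\}$ must be controlled uniformly in $i\in\mathcal{M}$, and the specific balance between $c^* = o(n^{-1/4}\log^{-z}n)$ in assumption 6 and the asymmetric bandwidth window in assumption 7 is precisely what forces both error pieces (a) and (b) to $o_p(1)$ simultaneously. Everything else reduces to applying the arguments already used for Theorems~\ref{thm1} and~\ref{Central} in the bootstrap world and invoking Gaussian-maxima comparison at fixed dimension.
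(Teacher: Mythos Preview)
Your plan is correct and follows essentially the same route as the paper: establish $\mathrm{Prob}^*(\widehat{\mathcal{N}}^*_{b_n}\neq\mathcal{N}_{b_n})=o_p(1)$ by replaying the Theorem~\ref{thm1} argument with Gaussian errors $\epsilon^*$, expand $\widehat{\gamma}^*_i-\widehat{\gamma}_i$ as a conditional Gaussian plus a vanishing ridge-bias remainder, then compare the two Gaussian maxima via a covariance bound $\Delta_n=o_p(1)$, which the paper isolates as a separate lemma (its Lemma~\ref{lemmaCov}) and splits exactly as your (a)/(b). Two small remarks: in the bootstrap world the truncation-bias term $\sum_{j\notin\mathcal{N}_{b_n}}m_{ij}\widehat{\beta}_j$ is identically zero (since $\widehat{\beta}_j=0$ there), so assumption~5 is not needed for $R^*_i$; and in your piece (a) the factor $k_n$ from $\sum_l K(l/k_n)=O(k_n)$ enters the residual-substitution bound, which is precisely why the upper window $k_n=O(n^{\eta/2-3/(2m)})$ in assumption~7 is calibrated as it is.
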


\section{Numerical simulations}
\label{numericla}
\subsection{Selecting hyper-parameters}
\label{finetune}
Statisticians need to fine-tune the ridge regression parameter $\rho_n$, the threshold $b_n$, and the bandwidth $k_n$. $\rho_n$ and $b_n$ can be chosen by (ten-fold) cross-validation, i.e., separate the design matrix $X$ and the dependent variables $y$ into disjoint training set $(X_{train}, y_{train})$ and validation set $(X_{valid}, y_{valid})$; for each choice of $\rho_n$ and $b_n$, use $(X_{train}, y_{train})$ to fit $\widehat{\beta}$; and calculate $\vertiii{y_{valid} - X_{valid}\widehat{\beta}}_2$. The optimal $\rho_n$ and $b_n$ should minimize $\vertiii{y_{valid} - X_{valid}\widehat{\beta}}_2$. This paper implements the ten-fold cross validation. See Arlot and Celisse \cite{10.1214/09-SS054} for a further introduction on the cross validation methods.

Fine-tuning $k_n$ is more challenging. Politis and White \cite{doi:10.1081/ETC-120028836} introduced an automatic bandwidth selection algorithm; Shao \cite{doi:10.1198/jasa.2009.tm08744} applied this algorithm for the dependent wild bootstrap. Andrews \cite{10.2307/2938229} and Kim and Sun \cite{KIM2011349} considered selecting bandwidth in the HAC estimation setting. Following Shao, this paper applies Politis and White's algorithm \cite{doi:10.1081/ETC-120028836} on fitted residuals $\widehat{\epsilon} = (\widehat{\epsilon}_1,...,\widehat{\epsilon}_n)^T = y - X\widehat{\beta}$ to select $k_n$. However, $\epsilon$ is not assumed to be stationary. So this algorithm may result in a suboptimal bandwidth.

\subsection{Generating data}
\label{genData}
The numerical experiment applies the linear model $y = X\beta + \epsilon$. $X$ is generated by i.i.d. normal random variables with mean $0$ and variance $1$, and is fixed in each experiment. $\beta = (\beta_1,...,\beta_p)^T$ is generated by the following strategy
\begin{equation}
\beta_i = 0.1\times(i + 6)\ \text{for } i = 1,2,...,10\ \text{and } 0\ \text{otherwise}
\end{equation}
Define $e_{i}, i \in\mathbf{Z}$ as i.i.d. normal random variables with mean $0$ and variance $1$. Choose $a = (a_1,a_2,...,a_n)^T$ such that $a_i = e^2_ie^2_{i - 1} - 1$. Then define $\epsilon = (\epsilon_1,...,\epsilon_n)^T = (1/4)\times Ha$(the factor $1/4$ avoids $\epsilon$'s variances being too large). Here $H = (h_{ij})_{i ,j = 1,...,n}$, $h_{ij} = 0$ for $j > i$, $h_{ii} = 1$, $h_{ij}$ is generated by uniform distribution in $[0.6, 0.9]$ for $i-10\leq j<i$ and $h_{ij} = s_{ij} / (i - j)^z$ for $j < i - 10$. $s_{ij}$ is generated by uniform distribution on $[-1, 1]$. $H$ is fixed in each experiment. For $\mathbf{E}a_i = \mathbf{E}e^2_i\times \mathbf{E}e^2_{i - 1} - 1 = 0$, we have $\mathbf{E}\epsilon = 0$. Moreover, $a$ is not white noise since $\mathbf{E}a_ia_{i-1} = 2$. $\epsilon$ is not linear because of $a$, and is not stationary because of $H$. Figure \ref{datalike} plots an observation of the errors $\epsilon$, and the first $10 \times 10$ elements of $\epsilon$'s covariance matrix. In figure \ref{onea}, the errors demonstrate strong dependency, i.e., $\epsilon_{i+1}$ is likely to be large if $\epsilon_i$ is large. In figure \ref{oneb}, $\epsilon$'s covariance matrix is not a Toeplitz matrix, so the distribution of $\epsilon$ is not stationary.

\begin{figure}[htbp]
\centering
    \subfigure[]{
    \includegraphics[width = 2.5in]{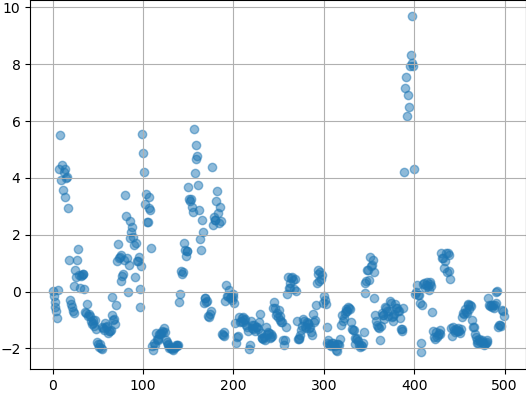}
    \label{onea}
  }
    \subfigure[]{
    \includegraphics[width = 3.0in]{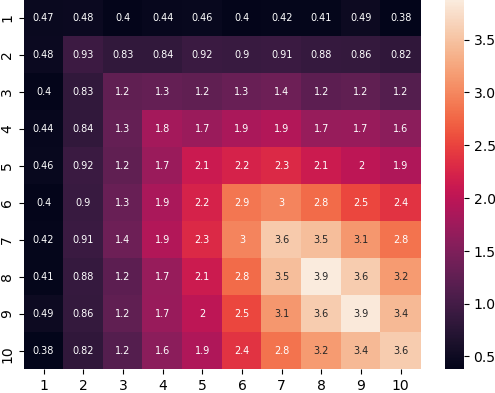}
    \label{oneb}
  }
  \caption{Figure \ref{onea} plots an observation of the errors $\epsilon$, and figure \ref{oneb} plots the heatmap for the first $10\times 10$ elements of $\epsilon$'s covariance matrix. Values in each grid represent the corresponding covariance. The covariance matrix is calculated by simulating 40000 samples of the random vector $\epsilon = (\epsilon_1,...,\epsilon_n)^T$.}
    \label{datalike}
\end{figure}

The linear combination matrix $M$ is generated by i.i.d. normal random variables with mean 0.5 and variance $0.25$, and is fixed in each experiment. The hyper-parameters $\rho_n, b_n, k_n$ are tuned by methods described in section \ref{finetune}. The sample size $n$, dimension $p$ and $z$ vary in each experiment. We store the information about experiments in table \ref{ExpRes}.

\begin{table}[htbp]
  \centering
  \caption{Experiment parameters. `No.' abbreviates `the experiment number', $n$ is the sample size. $p$ is the dimension of parameters. $p_1$ is the number of linear combinations. $z$ is defined in section \ref{genData}. $\rho_n,b_n,k_n$ respectively is the selected ridge parameter, the selected threshold, and the selected bandwidth defined in section \ref{Prelim}. $\lambda_p$ is the smallest singular value of the design matrix $X$. We use R-Package `np'\cite{Rnp} to facilitate choosing $k_n$.}
  \begin{tabular}{l l l l l l l l l}
  \hline\hline
  No. & $n$ & $p$ & $p_1$ & $z$ & $\rho_n$ & $b_n$ & $k_n$ & $\lambda_p$\\
  1 & 500   & 250 & 20 & 2.5 & 23.87 & 0.56 &  17.67 & 6.95\\
  2 & 500   & 400 & 20 & 2.5 & 34.97 & 0.66 &  23.42 & 2.47 \\
  3 & 500   & 250 & 60 & 2.5 & 69.22 & 0.47 &  16.75 & 6.66\\
  4 & 800   & 640 & 60 & 2.5 & 31.30 & 0.61 &  17.99 & 3.32 \\
  5 & 3000 & 1500 & 20  & 2.5 & 117.10 & 0.31 & 46.80 & 13.46\\
  6 & 500   & 250   &20 & 2.2 & 79.07  & 0.48 & 15.76  &6.53 \\
  \hline\hline
  \end{tabular}
  \label{ExpRes}
\end{table}

\subsection{Performance of linear regression algorithms}
This section compares the performance of the debiased and threshold ridge regression(thsDeb) to the classical linear regression algorithms including Lasso, the Ridge regression(Ridge), the threshold Lasso(thsLas), the threshold ridge regression(thsRid) and the elastic net. The evaluation indices consist of the estimation loss$\vertiii{M\widehat{\beta} - M\beta}_2$ and the prediction loss $\vertiii{X\widehat{\beta} - X\beta}_2$. The latter one is frequently used in evaluating linear regression algorithms, see e.g. Dalalyan et al. \cite{10.3150/15-BEJ756}.

We are also interested in the model selection performance(i.e., whether an algorithm can recover the sparsity of the underlying linear model or not) of the linear regression algorithms. Following the idea of Fithian et al. \cite{fithian}, we evaluate this through the frequency of model misspecification(i.e., $\widehat{\mathcal{N}}_{b_n}\neq \mathcal{N}_{b_n}$), $P(\widehat{\mathcal{N}}_{b_n}\neq \mathcal{N}_{b_n})$; the average size of model misspecification $\vert\widehat{\mathcal{N}}_{b_n}\Delta\mathcal{N}_{b_n}\vert$ ($\Delta$ denotes the symmetric difference, i.e. $A \Delta B = (A - B)\cup (B - A)$); and the average false discovery rate $\vert \widehat{\mathcal{N}}_{b_n} -  \mathcal{N}_{b_n}\vert / \max(1, \vert \widehat{\mathcal{N}}_{b_n} \vert)$. Since Lasso, ridge regression and elastic net do not have thresholds, for these algorithms we consider $i\in\widehat{\mathcal{N}}_{b_n}$ if $\vert\widehat{\beta}_i\vert > 0.01$.

According to figure \ref{Figure1}, figure \ref{figX} and table \ref{tab2}, the debiased and threshold ridge regression, the threshold Lasso and the threshold ridge regression have small  estimation loss and prediction loss compared to the other methods. Besides, these three methods are able to recover the underlying sparsity of the linear model correctly. Notably, even if Lasso is famous for generating sparse linear regression estimators, further thresholding still significantly improves its performance.

The necessity of debiasing can be illustrated in figure \ref{Figure1} and figure \ref{figX}. Although the threshold ridge regression has a small estimation loss when $\rho_n$ is close to its optimal value, the estimation loss surges when $\rho_n$ deviates from its optimal value. However, after debiasing, even if a cross validation selects a sub-optimal $\rho_n$, the estimator's performance does not notably deteriorate. In reality, cross validation algorithms cannot assure selecting the optimal ridge parameter $\rho_n$. So an algorithm that is not sensitive to the slight fluctuations in hyper-parameters is preferable.

Figure \ref{Figure1} also plots the relation between thresholds and the estimation losses. In the experiments, the cross validation always selects sub-optimal thresholds. However, the sub-optimal thresholds form wide-intervals. Therefore, slight fluctuations in the threshold $b_n$ should not deteriorate the performance of the regression algorithm as well.

\begin{figure}[htbp]
    \centering
    \subfigure[Experiment 1]{
    \includegraphics[width = 2.3in]{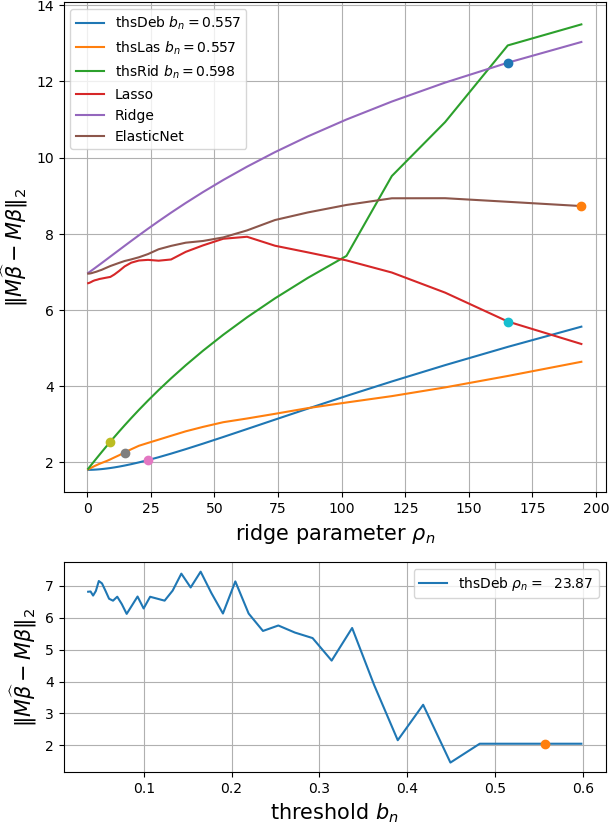}
  }
    \subfigure[Experiment 2]{
    \includegraphics[width = 2.3in]{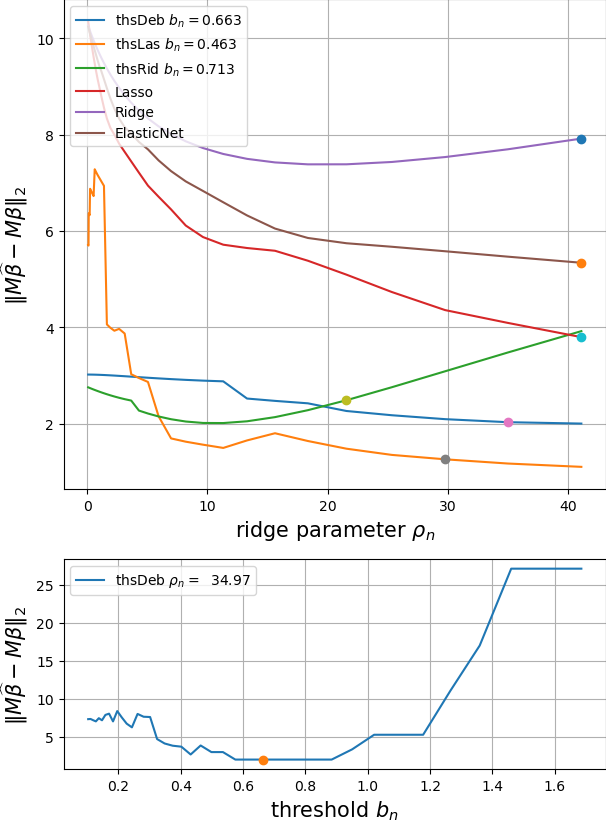}
  }
    \subfigure[Experiment 3]{
    \includegraphics[width = 2.3in]{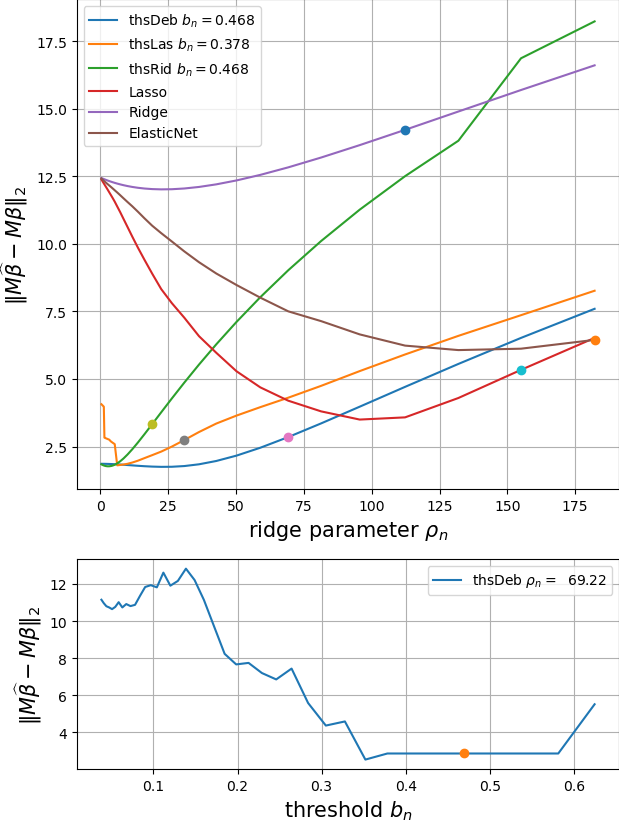}
  }
    \subfigure[Experiment 4]{
    \includegraphics[width = 2.3in]{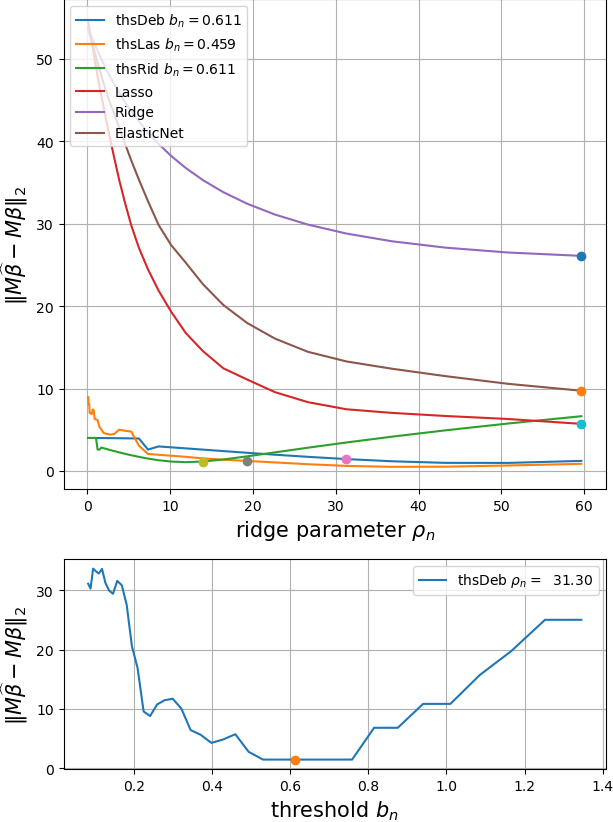}
  }
  \caption{Estimation losses of linear regression algorithms for case 1 to 4. `thsDeb' represents the debiased and threshold ridge regression method; `thsLas' represents the threshold Lasso method and `thsRid' represents the threshold ridge regression method. Dots reveal the optimal ridge parameters for each method selected by 10-fold cross validation.
  y-axis records the error $\vertiii{M\widehat{\beta} - M\beta}_2$, here $\widehat{\beta}$ is the estimator of parameters $\beta$ by different methods. We choose $l_1$ ratio $0.5$ in the ElasticNet. The small graphs below each of the graphs plot the error of the debiased and threshold ridge regression method with respect to different thresholds. $\rho_n$ and $b_n$ are chosen by 10-fold cross validation.}
  \label{Figure1}
\end{figure}

\begin{figure}[htbp]
\centering
\subfigure[Experiment 5]{
    \includegraphics[width = 2.3in]{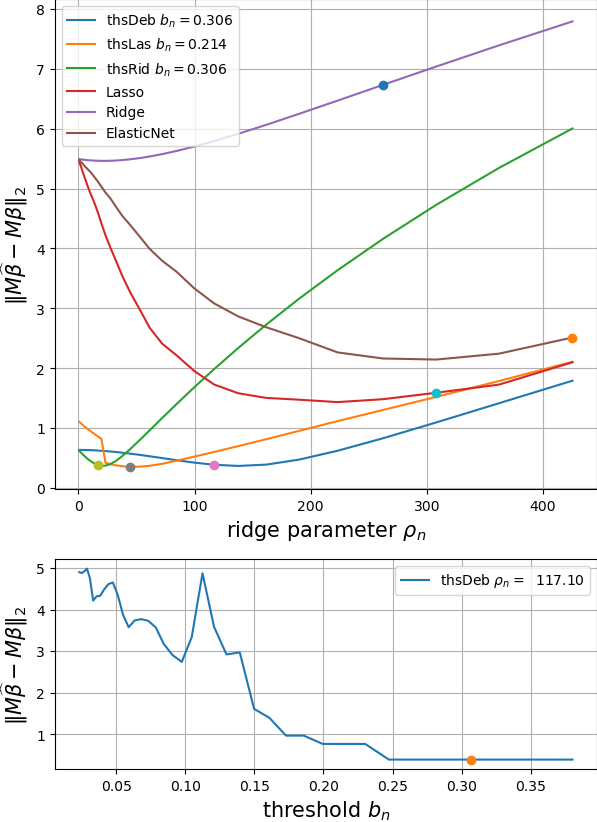}
  }
\subfigure[Experiment 6]{
    \includegraphics[width = 2.3in]{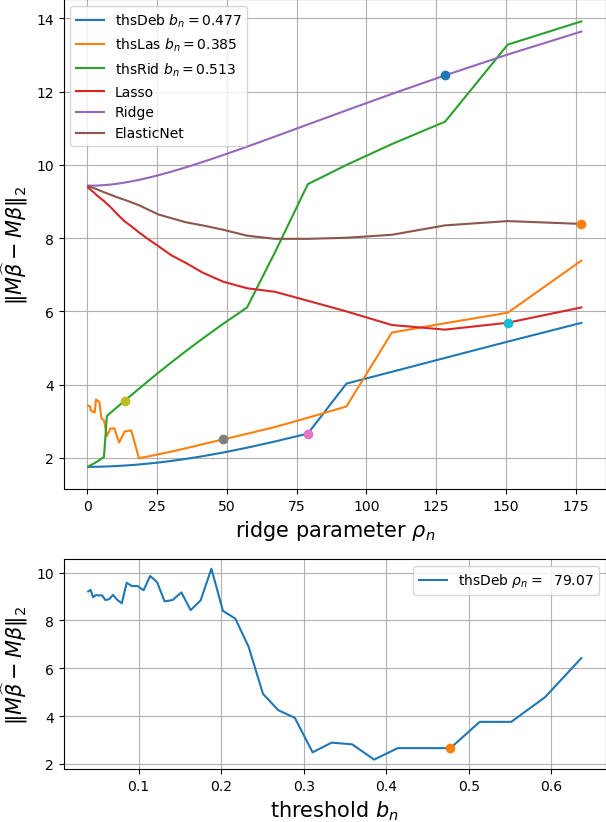}
}
\caption{Estimation losses of linear regression algorithms for case 5 to 6. Meaning of notations coincide with figure \ref{Figure1}.}
\label{figX}
\end{figure}

\begin{table}
\centering
\caption{Model selection performance of various linear regression estimators. Hyper-parameters are chosen by ten-fold cross validation. The overscore represents calculating the sample mean among 1000 simulations. `FDR' abbreviates the false discovery rate. We omit experiment 3 here for it coincides with experiment 1.}
\label{tab2}
\begin{tabular}{l l l l l l}
\hline\hline
\\
No. & Algorithm & $P\left(\widehat{\mathcal{N}}_{b_n}\neq \mathcal{N}_{b_n}\right)$ & $\overline{\widehat{\mathcal{N}}_{b_n}\Delta \mathcal{N}_{b_n}}$ & FDR & $\overline{\vertiii{X\widehat{\beta} - X\beta}}$ \\
1   &  thsDeb  & 0.211 & 0.334  & 0.010 & 11.335\\
     &  Lasso    & 0.943 & 8.663  & 0.386 & 14.992\\
     &  thsLas   & 0.190 & 0.261  & 0.005 & 10.212\\
     & Ridge     & 1.0      & 112.18 & 0.918 & 31.216\\
     & thsRid    & 0.323  & 0.411   & 0.006 & 12.369\\
     & Elastic net & 1.0  & 27.71   & 0.706  & 18.992\\
     \hline
2   &  thsDeb  & 0.790 & 1.233   & 0.014 & 21.575\\
     &  Lasso    & 1.0     & 86.28   & 0.892 & 25.873\\
     &  thsLas   & 0.142 & 0.323   & 0.015 & 10.797\\
     & Ridge     & 1.0      & 181.10 & 0.948 & 35.431\\
     & thsRid    & 0.801  & 1.214   & 0.019 & 27.727\\
     & Elastic net & 1.0  & 122.05  & 0.923 & 30.638\\
\hline
4   &  thsDeb  & 0.377 & 0.568   & 0.009 & 17.365\\
     &  Lasso    & 1.0     & 125.95 & 0.924 & 31.442\\
     &  thsLas   & 0.079 & 0.259   & 0.011 & 10.667\\
     & Ridge     & 1.0      & 284.67 & 0.966 & 46.353\\
     & thsRid    & 0.409  & 0.585   & 0.006 & 18.101\\
     & Elastic net & 1.0  & 184.87  & 0.948 & 38.673\\
\hline
5   &  thsDeb  & 0.031 & 0.045   & 0.004 & 9.781\\
     &  Lasso    & 1.0    & 32.81    & 0.728 & 18.394\\
     &  thsLas   & 0.093 & 0.248   & 0.017 & 9.703\\
     & Ridge     & 1.0     & 419.87 & 0.977 & 60.853\\
     & thsRid    & 0.035  & 0.055   & 0.004 & 9.933\\
     & Elastic net & 1.0  & 79.88  & 0.879 & 27.446\\
\hline
6   &  thsDeb  & 0.138 & 0.236   & 0.012 & 10.856\\
     &  Lasso    & 0.967    & 9.792    & 0.422 & 14.445\\
     &  thsLas   & 0.079 & 0.157   & 0.010 & 9.229\\
     & Ridge     & 1.0     &  105.775 & 0.913 & 30.164\\
     & thsRid    & 0.173  & 0.261   & 0.010 & 10.874\\
     & Elastic net & 1.0  & 28.313  & 0.711 & 18.617\\
\hline\hline
\end{tabular}
\end{table}

\subsection{Performance of bootstrap algorithm \ref{alg1}}
This section evaluates the performance of algorithm \ref{alg1} as well as the bootstrap algorithms designed for independent errors, -including Efron's bootstrap(Efron \cite{10.1214/aos/1176344552} and Chatterjee and Lahiri \cite{doi:10.1198/jasa.2011.tm10159}) and the wild bootstrap(Mammen \cite{10.1214/aos/1176349025})-, for the linear model with dependent errors. The results are demonstrated in table \ref{tabX}.

In table \ref{tabX}, the bootstrap algorithms have similar performance and are able to capture the correct $1-\alpha$ confidence intervals. Meanwhile, the dependent wild bootstrap tends to generate a wider confidence interval than Efron's bootstrap and the wild bootstrap.  Notably, the number of simultaneous linear combinations $p_1$ affects the performance of bootstrap algorithms. If $p_1$ is too large, then the bootstrap algorithms may generate an unnecessarily wide confidence interval. Numerical results show that the Efron's bootstrap and the wild bootstrap are robust to the dependent and heteroskedastic errors. However, the consistency of those algorithms cannot be assured.

\begin{table}[htbp]
\centering
\caption{Performance of various bootstrap algorithms. The number of bootstrap replicates is 1000 and the nominal coverage probability is $90\%$. We use $P$ to abbreviate the real coverage probability and $C$ to abbreviate the average $90\%$ quantile of $\max_{i = 1,...,p_1}\frac{\vert\widehat{\gamma}^*_i-\widehat{\gamma}_i\vert}{\widehat{\tau}^*_i}$. $P$ and $C$ are derived through 1000 simulations. `Efron' means `Efron's bootstrap' and `Wild' means the wild bootstrap. }
\label{tabX}
\begin{tabular}{l l l l l l l}
\hline\hline
No & Algorithm \ref{alg1} $P$ & Algorithm \ref{alg1} $C$ & Efron $P$ & Efron $C$ & Wild $P$ & Wild $C$\\
1   & $90.7\%$  & 9.32  & $88.0\%$ & 8.83  & $88.4\%$ & 8.86\\
2   & $89.1\%$  & 17.14 & $87.7\%$ & 16.71 & $87.2\%$ & 16.76\\
3   & $95.1\%$  & 11.77 & $94.4\%$ & 11.26 & $94.0\%$ & 11.31\\
4   & $89.8\%$  & 14.38 & $89.8\%$ & 13.89 & $88.8\%$ & 13.86\\
5   & $90.2\%$  & 4.72   &  $89.1\%$ & 4.62  & $89.7\%$ & 4.64\\
6   & $92.8\%$  & 11.38 & $92.2\%$ & 10.70 & $92.1\%$ & 10.75\\
\hline\hline
\end{tabular}
\end{table}

\section{Conclusion}
\label{conclusion}
This paper applies the debiased and threshold ridge regression method to a linear model $y = X\beta + \epsilon$. Then it derives a dependent wild bootstrap algorithm that constructs confidence intervals for linear combinations of parameters $\gamma = M\beta$; or tests the statistical hypothesis
\begin{equation}
M\beta = z \textit{ versus } M\beta \neq z
\label{testss}
\end{equation}
Here $M$ is a known matrix, and $z$ is the given expected value of $M\beta$.

Our work suits a high dimensional setting, i.e., the dimension of parameters $\beta$ may grow as the sample size increases. Besides, the adopted regression method and the bootstrap procedure are consistent for dependent, non-stationary, heteroskedastic errors $\epsilon$. $\epsilon$ is not necessary to be a linear process as well.

Numerical simulations indicate that the ridge regression estimator has a comparable model selection and estimation performance to complex methods like the threshold Lasso. Furthermore, it is robust to the non-optimal choice of the ridge parameter $\rho_n$ as well as the threshold $b_n$. Therefore, it can be considered as a practical method to handle real-life problems.

Our work generalizes results in Conley et al. \cite{Conley} to a more realistic setting. We have not yet seen many works on a high-dimensional linear model with non-stationary errors. So this work should bring new insights to this field.

%\printbibliography
\bibliographystyle{unsrt}
\bibliography{Draft3}
\appendix
\clearpage

\section{Some useful results}
\label{UsefulRes}
Notations in the appendix should coincide with section \ref{Prelim} unless we specify their meanings. This section starts with some useful corollaries of assumption 2. From corollary C.9 in \cite{StoDiff}, $\epsilon_i = \lim_{j\to\infty}\mathbf{E}\epsilon_i|\mathcal{F}_{i,j}$ almost surely; and $\lim_{j\to\infty}\Vert\epsilon_i - \mathbf{E}\epsilon_i|\mathcal{F}_{i,j}\Vert_m = 0, \forall i = 1,2,...,n$. For
\begin{equation}
\Vert\mathbf{E}\epsilon_i|\mathcal{F}_{i,j} - \mathbf{E}\epsilon_{i}|\mathcal{F}_{i,j-1}\Vert_m = \Vert\mathbf{E}(\epsilon_i - \epsilon_{i,j})|\mathcal{F}_{i,j}\Vert_m\leq \Vert\epsilon_i - \epsilon_{i,j}\Vert_m\leq \max_{i = 1,2,...,n}\delta_{i,j,m}
\label{Deltas}
\end{equation}
Therefore, assumption 2 implies $\sup_{i = 1,2,...,n; k\geq 0}(k+1)^{\alpha_\epsilon}\sum_{j = k}^\infty\Vert\mathbf{E}\epsilon_i|\mathcal{F}_{i,j} - \mathbf{E}\epsilon_{i}|\mathcal{F}_{i,j-1}\Vert_m = O(1)$. For any $a_i\in\mathbf{R}, i =1,2,...,n$ and $s\in\mathbf{Z}^+$, define $M_{i,s} = \sum_{j = n+1-i}^n a_j(\mathbf{E}\epsilon_j|\mathcal{F}_{j, s} - \mathbf{E}\epsilon_j|\mathcal{F}_{j, s-1})$, then $M_{i,s}$ is measurable in the filter $\mathcal{F}_{n, s+i-1}$. Besides, $M_{i+1,s} - M_{i,s} = a_{n-i}\mathbf{E}\epsilon_{n-i}|\mathcal{F}_{n-i,s} - a_{n-i}\mathbf{E}\epsilon_{n-i}|\mathcal{F}_{n-i, s-1}$. Apply $\pi-\lambda$ theorem to the

\noindent $\lambda-$system $\left\{A\in\mathcal{F}_{n, s+i-1}\Big|\mathbf{E}(\mathbf{E}\epsilon_{n-i}|\mathcal{F}_{n-i,s})\mathbf{1}_A = \mathbf{E}(\mathbf{E}\epsilon_{n-i}|\mathcal{F}_{n-i,s-1})\mathbf{1}_A\right\}$ and
the $\pi-$system $\{A_n\times A_{n-1}\times...\times A_{n-s-i+1}\}$. Here $A_i$ is generated by $e_i$. Then $\mathbf{E}(\mathbf{E}\epsilon_{n-i}|\mathcal{F}_{n-i,s} - \mathbf{E}\epsilon_{n-i}|\mathcal{F}_{n-i, s-1})\Big|\mathcal{F}_{n,s+i-1} = 0$ almost surely. In other words, $\{M_{i,s}\}_{i = 1,2,...,n}$ is a martingale. From Burkholder's inequality(theorem 1.1 in \cite{burkholder1972}),
\begin{equation}
\Vert M_{n,s}\Vert_m \leq C\sqrt{\Vert\sum_{j = 1}^na^2_j(\mathbf{E}\epsilon_j|\mathcal{F}_{j, s} - \mathbf{E}\epsilon_j|\mathcal{F}_{j, s-1})^2\Vert_{m/2}}\leq C\sqrt{\sum_{i = 1}^n a^2_i}\times\max_{i =1,2,...,n}\delta_{i,s,m}
\label{DeltaM}
\end{equation}
Here $C$ is independent of $s$. In particular, from theorem 2 in \cite{doi:10.1137/1105028} and assumption 2,
\begin{equation}
\begin{aligned}
\Vert\sum_{i = 1}^n a_i\epsilon_i\Vert_m\leq \Vert\sum_{i = 1}^n a_i\mathbf{E}\epsilon_i|\mathcal{F}_{i,0}\Vert_m + \sum_{s = 1}^\infty\Vert M_{n,s}\Vert_m\\
\leq C\sqrt{\sum_{i = 1}^n a^2_i\Vert\epsilon_i\Vert^2_m} + C\sqrt{\sum_{i = 1}^n a^2_i}\times \sum_{s = 1}^\infty\max_{i = 1,2,...,n}\delta_{i,s,m} = O\left(\sqrt{\sum_{i = 1}^na^2_i}\right)
\end{aligned}
\label{LinearComb}
\end{equation}
Here $C$ is a constant. Since $m>2$, \eqref{LinearComb} implies $\Sigma$'s largest eigenvalue has order $O(1)$. For $\forall 1\leq i < j \leq n$, assumption 2 and \eqref{Deltas} imply
\begin{equation}
\begin{aligned}
\vert\mathbf{E}\epsilon_i\epsilon_j\vert = \vert\mathbf{E}\epsilon_i(\epsilon_j - \mathbf{E}\epsilon_j|\mathcal{F}_{j,j-i-1})\vert\leq \Vert\epsilon_i\Vert_2\times \Vert\epsilon_j - \mathbf{E}\epsilon_j|\mathcal{F}_{j,j-i-1}\Vert_2\\
\leq \Vert\epsilon_i\Vert_m\times \sum_{s = j-i-1}^\infty \Vert\mathbf{E}\epsilon_j|\mathcal{F}_{j,s+1} - \mathbf{E}\epsilon_j|\mathcal{F}_{j,s}\Vert_m
\Rightarrow \max_{1\leq i < j\leq n}(1 + j-i)^{\alpha_\epsilon}\times \vert\mathbf{E}\epsilon_i\epsilon_j\vert = O(1)
\end{aligned}
\label{Decay}
\end{equation}
In other words, assumption 2 implies a polynomial decay on the error's covariance.

For $\forall\tau ,\psi> 0, z\in\mathbf{R}$, define $F_\tau(x_1,...,x_n) = \frac{1}{\tau}\log(\sum_{i = 1}^n \exp(\tau x_i))$; $G_\tau(x_1,...,x_n) = \frac{1}{\tau}\log(\sum_{i = 1}^n \exp(\tau x_i) + \sum_{i =1}^n \exp(-\tau x_i)) = F_\tau(x_1,...,x_n, -x_1,...,-x_n)$;

\noindent$g_0(x) =  (1 - \min(1, \max(x,0))^4)^4$; $g_{\psi, z}(x) = g_0(\psi(x - z))$; and $h_{\tau, \psi, z}(x_1,...,x_n) = g_{\psi, z}\left(G_\tau(x_1,...,x_n)\right)$. From lemma A.2 and (8) in \cite{chernozhukov2013} and (S1) to (S5) in \cite{10.1093/biomet/asz020}, $g_* = \sup_{x\in\mathbf{R}}(\vert g^{'}_0(x)\vert + \vert g^{''}_0(x)\vert + \vert g^{'''}_0(x)\vert) < \infty$; $\mathbf{1}_{x\leq z}\leq g_{\psi, z}(x)\leq \mathbf{1}_{x\leq z + 1/\psi}$; and $\sup_{x,z\in\mathbf{R}}\vert g^{'}_{\psi,z}(x)\vert\leq g_*\psi$, $\sup_{x,z\in\mathbf{R}}\vert g^{''}_{\psi, z}(x)\vert\leq g_*\psi^2$, $\sup_{x,z\in\mathbf{R}}\vert g^{'''}_{\psi,z}(x)\vert\leq g_*\psi^3$. Besides, $\frac{\partial F_\tau}{\partial x_i}\geq 0$; $\sum_{i = 1}^n\frac{\partial F_\tau}{\partial x_i} = 1$; $\sum_{i = 1}^n\sum_{j = 1}^n\vert\frac{\partial^2 F_\tau}{\partial x_i\partial x_j}\vert\leq 2\tau$; $\sum_{i = 1}^n\sum_{j = 1}^n\sum_{k = 1}^n \vert\frac{\partial^3 F_\tau}{\partial x_i\partial x_j\partial x_k}\vert\leq 6\tau^2$; and $F_\tau(x_1,...,x_n) - \frac{\log(n)}{\tau}\leq\max_{i = 1,2,...,n} x_i\leq F_\tau(x_1,...,x_n)$. Therefore,
\begin{equation}
G_\tau(x_1,...,x_n) - \frac{\log(2n)}{\tau}\leq \max_{i = 1,2,...,n}\vert x_i\vert\leq G_\tau(x_1,...,x_n)
\end{equation}
Since $\frac{\partial G_\tau}{\partial x_i} = \frac{\partial F_\tau}{\partial x_i} - \frac{\partial F_\tau}{\partial x_{i + n}}$, we get $\sum_{i = 1}^n\vert\frac{\partial G_\tau}{\partial x_i}\vert\leq 1$. For $\frac{\partial^2 G_\tau}{\partial x_i\partial x_j} = \frac{\partial^2 F_\tau}{\partial x_i\partial x_j} - \frac{\partial^2 F_\tau}{\partial x_i\partial x_{j + n}} - \frac{\partial^2 F_\tau}{\partial x_{i + n}\partial x_{j}} + \frac{\partial^2 F_\tau}{\partial x_{i+n}\partial x_{j+n}}$, we have $\sum_{i = 1}^n\sum_{j = 1}^n\vert\frac{\partial^2 G_\tau}{\partial x_i\partial x_j}\vert\leq 2\tau$. For
\begin{equation}
\begin{aligned}
\frac{\partial^3 G_\tau}{\partial x_i\partial x_j\partial x_k} = \left(\frac{\partial^3 F_\tau}{\partial x_i\partial x_j\partial x_k} + \frac{\partial^3 F_\tau}{\partial x_i\partial x_{j+n}\partial x_{k+n}} + \frac{\partial^3 F_\tau}{\partial x_{i + n}\partial x_j\partial x_{k+n}} + \frac{\partial^3 F_\tau}{\partial x_{i + n}\partial x_{j+n}\partial x_k}\right)\\
- \left(\frac{\partial^3 F_\tau}{\partial x_i\partial x_j \partial x_{n + k}} + \frac{\partial^3 F_\tau}{\partial x_i\partial x_{j + n}\partial x_k} + \frac{\partial^3 F_\tau}{\partial x_{i+n}\partial x_j\partial x_k} + \frac{\partial^3 F_\tau}{\partial x_{i+n}\partial x_{j+n}\partial x_{k+n}}\right)\\
\end{aligned}
\end{equation}
we have $\sum_{i = 1}^n\sum_{j = 1}^n\sum_{k = 1}^n\vert\frac{\partial^3 G_\tau}{\partial x_i\partial x_j\partial x_k}\vert\leq 6\tau^2$. Since $\frac{\partial h_{\tau, \psi, z}}{\partial x_i} = g^{'}_{\psi,z}(G_\tau(x_1,...,x_n))\frac{\partial G_\tau}{\partial x_i}$, we get $\sum_{i = 1}^n\vert\frac{\partial h_{\tau,\psi,z}}{\partial x_i}\vert\leq g_*\psi$.
\begin{equation}
\begin{aligned}
\frac{\partial^2 h_{\tau,\psi,z}}{\partial x_i\partial x_j} = g^{''}_{\psi,z}(G_\tau(x_1,...,x_n))\frac{\partial G_\tau}{\partial x_i}\frac{\partial G_\tau}{\partial x_j} + g^{'}_{\psi,z}(G_\tau(x_1,...,x_n))\frac{\partial^2 G_\tau}{\partial x_i\partial x_j}\\
\Rightarrow \sum_{i = 1}^n\sum_{j = 1}^n\vert \frac{\partial h_{\tau,\psi,z}}{\partial x_i\partial x_j}\vert\leq g_*\psi^2 + 2g_*\psi\tau\\
\frac{\partial^3 h_{\tau,\psi,z}}{\partial x_i\partial x_j \partial x_k} = g^{'''}_{\psi, z}(G_\tau(x_1,...,x_n))\frac{\partial G_\tau}{\partial x_i}\frac{\partial G_\tau}{\partial x_j}\frac{\partial G_\tau}{\partial x_k}+ g^{'}_{\psi, z}(G_\tau(x_1,...,x_n))\frac{\partial^3 G_\tau}{\partial x_i\partial x_j\partial x_k}\\
+ g^{''}_{\psi,z}(G_\tau(x_1,...,x_n))\left(\frac{\partial^2 G_\tau}{\partial x_i\partial x_k}\frac{\partial G_\tau}{\partial x_j} + \frac{\partial^2 G_\tau}{\partial x_j\partial x_k}\frac{\partial G_\tau}{\partial x_i} + \frac{\partial^2 G_\tau}{\partial x_i\partial x_j}\frac{\partial G_\tau}{\partial x_k}\right) \\
\Rightarrow \sum_{i = 1}^n\sum_{j = 1}^n\sum_{k = 1}^n\vert\frac{\partial^3 h_{\tau,\psi,z}}{\partial x_i\partial x_j \partial x_k}\vert\leq g_*\psi^3 + 6g_*\psi\tau^2 + 6g_*\tau\psi^2
\end{aligned}
\label{allInequ}
\end{equation}
\section{Proof of theorem \ref{thm1} and theorem \ref{Central}}
\label{app2}
We first prove necessary lemmas.
\begin{lemma}
Suppose random variables $\epsilon_i, i = 1,2,...,n$ satisfy assumption 2, and $\{\gamma_{ij}\}_{i = 1,2,...,k, j = 1,2,...,n}\in\mathbf{R}^{k\times n}$ satisfy

\noindent $\max_{i = 1,2,...,k}\sum_{j = 1}^n\gamma^2_{ij}\leq D^2$. Then
\begin{equation}
\max_{i = 1,2,...,k}\vert\sum_{j = 1}^n\gamma_{ij}\epsilon_j\vert = O_p(k^{1/m}D)
\label{ProbIneq}
\end{equation}
\label{lemmaMoment}
\end{lemma}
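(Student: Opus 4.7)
The plan is to combine the $L^m$-norm bound for a single linear combination of the errors, which is essentially already derived in \eqref{LinearComb}, with a standard Markov plus union bound argument to convert it into a high-probability maximum estimate over the $k$ indices.

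First I would fix an arbitrary $i\in\{1,\ldots,k\}$ and apply \eqref{LinearComb} with $a_j = \gamma_{ij}$. Because assumption 2 gives $\max_i\|\epsilon_i\|_m = O(1)$ and $\sum_{s=1}^\infty \max_i \delta_{i,s,m}=O(1)$, the constant in \eqref{LinearComb} is uniform in $i$, so there is a constant $C_0>0$, independent of $i$ and $n$, with
\begin{equation*}
\Bigl\|\sum_{j=1}^n \gamma_{ij}\epsilon_j\Bigr\|_m \;\leq\; C_0\sqrt{\sum_{j=1}^n \gamma_{ij}^2}\;\leq\; C_0 D.
\end{equation*}
Equivalently, $\mathbf{E}\bigl|\sum_{j=1}^n \gamma_{ij}\epsilon_j\bigr|^m \leq C_0^m D^m$ uniformly in $i$.

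Next I would apply Markov's inequality index by index, then a union bound. For any $\lambda>0$,
\begin{equation*}
\mathrm{Prob}\Bigl(\max_{i=1,\ldots,k}\bigl|\sum_{j=1}^n \gamma_{ij}\epsilon_j\bigr| \geq \lambda\Bigr)
\;\leq\;\sum_{i=1}^k \frac{\mathbf{E}\bigl|\sum_{j=1}^n \gamma_{ij}\epsilon_j\bigr|^m}{\lambda^m}
\;\leq\; \frac{k\,C_0^m D^m}{\lambda^m}.
\end{equation*}
Choosing $\lambda = C_\varepsilon\, k^{1/m} D$ with $C_\varepsilon = C_0 \varepsilon^{-1/m}$ makes the right-hand side at most $\varepsilon$, uniformly in $n$. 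This is exactly the definition of $O_p(k^{1/m}D)$ given in Section \ref{Prelim}, which concludes the argument.

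There is no genuine obstacle here, since the nontrivial work has already been done in Appendix \ref{UsefulRes}: the martingale decomposition $\epsilon_i = \mathbf{E}\epsilon_i|\mathcal{F}_{i,0} + \sum_{s\geq 1}(\mathbf{E}\epsilon_i|\mathcal{F}_{i,s}-\mathbf{E}\epsilon_i|\mathcal{F}_{i,s-1})$, Burkholder's inequality, and the dependence-adjusted summability from assumption 2 jointly yield \eqref{LinearComb}. The only mild point to check is that the constant emerging from \eqref{LinearComb} does not depend on $i$; this is immediate because the bound involves only $\max_i\|\epsilon_i\|_m$ and $\sum_s \max_i \delta_{i,s,m}$, both of which are controlled uniformly in $i$ by assumption 2. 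Hence the lemma is really a packaging of the $L^m$ estimate plus a Markov union bound, and no further probabilistic tools are needed.
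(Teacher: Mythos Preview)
Your proposal is correct and essentially identical to the paper's own proof: both invoke \eqref{LinearComb} to get a uniform $L^m$ bound $\Vert\sum_{j}\gamma_{ij}\epsilon_j\Vert_m = O(D)$, then apply Markov's inequality with a union bound over $i=1,\ldots,k$ and set the threshold $\xi = C k^{1/m}D$. Your explicit remark that the constant in \eqref{LinearComb} is uniform in $i$ is a nice clarification the paper leaves implicit.
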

\begin{proof}
Form \eqref{LinearComb}, $\forall \xi>0$,
\begin{equation}
\begin{aligned}
Prob\left(\max_{i = 1,2,...,k}\vert\sum_{j = 1}^n\gamma_{ij}\epsilon_j\vert>\xi\right)\leq \frac{1}{\xi^m}\sum_{i = 1}^k\Vert\sum_{j = 1}^n\gamma_{ij}\epsilon_j\Vert_m^m = O\left(\frac{k}{\xi^m}\times \left(\max_{i =1,2,...,k}\sum_{j = 1}^n\gamma^2_{ij}\right)^{m/2}\right)
\end{aligned}
\end{equation}
\eqref{ProbIneq} is proved by choosing $\xi = Ck^{1/m}D$ with a sufficiently large constant $C$.
\end{proof}
Lemma \ref{lemmaMoment} lays the theoretical foundation for the threshold ridge regression estimator $\widehat{\beta}$'s (model selection) consistency. Lemma \ref{GaussianRes} is a corollary of Chernozhukov et al. \cite{AntiConcentration}. It introduces some properties of a Gaussian random vector.

\begin{lemma}
1. Suppose $\epsilon = (\epsilon_1,...,\epsilon_n)^T$ are joint normal random variables, $\mathbf{E}\epsilon = 0$, $\mathbf{E}\epsilon\epsilon^T = \Sigma = (\sigma_{ij})_{i,j = 1,...,n}$ is non-singular; and $\exists 0<c_0\leq C_0<\infty$ such that $c_0\leq\sigma_{ii}\leq C_0, i = 1,2,...,n$. Then
\begin{equation}
\sup_{x\in\mathbf{R}}\vert Prob(\max_{i = 1,2,...,n}\vert\epsilon_i\vert\leq x+\delta) - Prob(\max_{i = 1,2,...,n}\vert\epsilon_i\vert\leq x)\vert \leq C\delta(1 + \sqrt{\log(n)} + \sqrt{\vert\log(\delta)\vert})
\label{FirGaussian}
\end{equation}
Here $C$ only depends on $c_0$ and $C_0$.

\noindent 2. In addition suppose $\xi = (\xi_1,...,\xi_n)^T$ are joint normal random variables with $\mathbf{E}\xi = 0$, $\mathbf{E}\xi\xi^T = \Sigma^* = (\sigma^*_{ij})_{i,j = 1,2,...,n}$, and $\Delta = \max_{i,j = 1,2,...,n}\vert\sigma_{ij} - \sigma^*_{ij}\vert < 1$. Then
\begin{equation}
\begin{aligned}
\sup_{x\in\mathbf{R}}\vert Prob(\max_{i = 1,2,...,n}\vert\epsilon_i\vert\leq x) - Prob(\max_{i = 1,2,...,n}\vert\xi_i\vert\leq x)\vert\\
\leq C^*\left(\Delta^{1/3}(1 + \log^3(n)) + \frac{\Delta^{1/6}}{1 + \log^{1/4}(n)}\right)
\label{Continuity}
\end{aligned}
\end{equation}
for $n = 1,2,...$. Here $C^*$ only depends on $c_0$ and  $C_0$.
\label{GaussianRes}
\end{lemma}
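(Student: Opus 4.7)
The plan is to deduce both parts from the anti-concentration and Slepian-type comparison inequalities for Gaussian maxima established by Chernozhukov, Chetverikov and Kato \cite{AntiConcentration}, after a single reduction. The key observation is that
$$
\max_{i=1,\ldots,n}\vert\epsilon_i\vert \;=\; \max_{j=1,\ldots,2n}\tilde\epsilon_j,
$$
where $\tilde\epsilon = (\epsilon_1,\ldots,\epsilon_n,-\epsilon_1,\ldots,-\epsilon_n)^T$ is again a centered Gaussian vector whose $2n$ marginal variances still lie in $[c_0,C_0]$. Consequently every inequality stated in \cite{AntiConcentration} for the one-sided maximum of a Gaussian vector transfers, with $n$ replaced by $2n$, to $\max_i\vert\epsilon_i\vert$; the factor of $2$ is absorbed into the constant depending only on $c_0$ and $C_0$.

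For Part~1, set $M=\max_j\tilde\epsilon_j$ and invoke the Nazarov-type anti-concentration estimate of \cite{AntiConcentration}, which gives
$$
\sup_{x\in\mathbf{R}} Prob(x\leq M\leq x+\delta)\;\leq\; C\,\delta\bigl(1+\sqrt{\log(2n)}+\sqrt{\vert\log\delta\vert}\,\bigr),
$$
with $C$ depending only on $c_0,C_0$. The left-hand side of \eqref{FirGaussian} is exactly this probability, so \eqref{FirGaussian} follows immediately after absorbing $\sqrt{\log 2}$ into $C$.

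For Part~2, double $\xi$ analogously to obtain $\tilde\xi\in\mathbf{R}^{2n}$, whose covariance differs entrywise from that of $\tilde\epsilon$ by at most $\Delta$. Apply a Gaussian (Stein-type) interpolation to the smooth surrogate $h_{\tau,\psi,z}$ constructed in Appendix~\ref{UsefulRes}, which approximates the indicator of $\{\max\leq z\}$. The second- and third-derivative bounds recorded in \eqref{allInequ} yield, for every $\tau,\psi>0$,
$$
\bigl\vert\mathbf{E} h_{\tau,\psi,z}(\tilde\epsilon) - \mathbf{E} h_{\tau,\psi,z}(\tilde\xi)\bigr\vert \;\leq\; C\,\Delta\,\bigl(\psi^2 + \tau\psi\bigr).
$$
The smoothing errors incurred in replacing $\max$ by $G_\tau$ and the indicator by $g_{\psi,z}$ are controlled, via Part~1 applied at scale $1/\psi + \log(2n)/\tau$, by a residual anti-concentration term of order $(1/\psi+\log n/\tau)(1+\sqrt{\log n})$. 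Balancing the comparison contribution against this residual by choosing $\tau$ and $\psi$ as appropriate powers of $\Delta$ and $\log n$ produces the two-term bound in \eqref{Continuity}: the $\Delta^{1/3}(1+\log^3 n)$ piece is the usual comparison rate, and the $\Delta^{1/6}/(1+\log^{1/4} n)$ piece is the residual anti-concentration error at the optimized smoothing scale.

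The main obstacle is bookkeeping: correctly tracking the logarithmic factors through the derivative bounds \eqref{allInequ}, then choosing $\tau,\psi$ so that the exponents $1/3$ and $1/6$ on $\Delta$ and the powers $\log^3 n$ and $\log^{-1/4} n$ emerge exactly as in \eqref{Continuity}. Once the doubling trick has reduced absolute-value maxima to one-sided maxima, the analytic content is entirely contained in Part~1 combined with the auxiliary inequalities of Appendix~\ref{UsefulRes}, so no further probabilistic input beyond \cite{AntiConcentration} is required.
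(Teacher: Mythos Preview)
Your proposal is correct and follows essentially the same approach as the paper: reduce $\max_i|\epsilon_i|$ to a one-sided Gaussian maximum (you by explicit doubling to a $2n$-vector, the paper by using the $G_\tau$-based surrogate $h_{\tau,\psi,z}$ which absorbs the doubling internally), then invoke the Chernozhukov--Chetverikov--Kato anti-concentration bound for Part~1 and Stein/Slepian interpolation combined with Part~1 for Part~2. The paper's specific choice is $\tau=\psi=(1+\log^{3/2}n)/\Delta^{1/3}$, which makes the comparison term $\Delta(\psi^2+\psi\tau)\asymp\Delta^{1/3}(1+\log^3 n)$ and the residual $t\asymp\Delta^{1/3}/(1+\log^{1/2}n)$; the $\Delta^{1/6}/(1+\log^{1/4}n)$ piece then arises from the crude bound $t\sqrt{|\log t|}\lesssim\sqrt{t}$, which is the only step your outline leaves implicit.
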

We emphasize that $\Sigma^*$ can be singular. From theorem 4.1.5 in \cite{matrix} $\Sigma^* = Q\Lambda Q^T, \Lambda = diag(\lambda_1,...,\lambda_r, 0, 0, ..., 0), 0\leq r<n$, and $Q = (q_{ij})_{i,j = 1,...,n}$ satisfies $QQ^T = Q^TQ = I_n$. We define $\tau = Q^T\xi = (\tau_1,...,\tau_r, 0, ..., 0)^T$ almost surely. For any continuously differentiable function $f$ such that $\mathbf{E}\sum_{i = 1}^n \vert\frac{\partial f}{\partial x_i}\vert<\infty$, lemma 2 in \cite{chernozhukov2013} implies
\begin{equation}
\mathbf{E}\xi_i f(\xi) = \sum_{j = 1}^r q_{ij}\mathbf{E}\tau_j f(Q\tau) = \sum_{j = 1}^r \sum_{k = 1}^r \sum_{l = 1}^n q_{ij}q_{lk}\mathbf{E}(\tau_j\tau_k)\times\mathbf{E}\frac{\partial f}{\partial x_l}(Q\tau) = \sum_{l = 1}^n\mathbf{E}\frac{\partial f}{\partial x_l}(\xi)\mathbf{E}\xi_i\xi_l
\label{Stein}
\end{equation}
This observation assures that \eqref{Continuity} works for degenerated $\xi$.
\begin{proof}
Since $\vert\epsilon_i\vert = \max(\epsilon_i,-\epsilon_i)\Rightarrow \max_{i = 1,2,...,n}\vert\epsilon_i\vert = \max(\max_{i = 1,...,n}\epsilon_i, \max_{i = 1,2,...,n}-\epsilon_i)$, and $-\epsilon$ has the same distribution as $\epsilon$,
\begin{equation}
\begin{aligned}
\sup_{x\in\mathbf{R}}\left(Prob\left(\max_{i = 1,2,...,n}\vert\epsilon_i\vert\leq x+\delta\right) - Prob\left(\max_{i = 1,2,...,n}\vert\epsilon_i\vert\leq x\right)\right)
\leq \sup_{x\in\mathbf{R}}Prob\left(x<\max_{i = 1,...,n}\epsilon_i\leq x+\delta\right)\\
+ \sup_{x\in\mathbf{R}}Prob\left(x<\max_{i = 1,2,...,n}-\epsilon_i\leq x+\delta\right)
\leq 2\sup_{x\in\mathbf{R}}Prob\left(\vert \max_{i = 1,...,n}\epsilon_i - x\vert\leq \delta\right)
\end{aligned}
\end{equation}
From theorem 3 and (18), (19) in Chernozhukov et al.\cite{AntiConcentration}(also see lemma 3 in Zhang and Politis \cite{Zhang2020RidgeRR}), we define $\underline{\sigma} = \min_{i = 1,2,...,n}\sigma_{ii}$ and $\overline{\sigma} = \max_{i = 1,2,...,n}\sigma_{ii}$,
\begin{equation}
\begin{aligned}
\sup_{x\in\mathbf{R}}Prob\left(\vert\max_{i=1,2,...,n}\epsilon_i - x\vert\leq \delta\right)\leq \frac{\sqrt{2}\delta}{\underline{\sigma}}\left(\sqrt{\log(n)} + \sqrt{\max(1, \log(\underline{\sigma}) - \log(\delta))}\right)\\
+ \frac{4\sqrt{2}\delta}{\underline{\sigma}}\times\left(\frac{\overline{\sigma}}{\underline{\sigma}}\sqrt{\log(n)} + 2 + \frac{\overline{\sigma}}{\underline{\sigma}}\sqrt{\max(0,\log(\underline{\sigma}) - \log(\delta))}\right)\\
\leq \frac{\sqrt{2}\delta}{c_0}\left(\sqrt{\log(n)} + \sqrt{1 + \vert\log(c_0)\vert + \vert\log(C_0)\vert} + \sqrt{\vert\log(\delta)\vert}\right)\\
+ \frac{4\sqrt{2}\delta C_0}{c_0^2}\left(\sqrt{\log(n)} + 2 + \sqrt{\vert\log(c_0)\vert + \vert\log(C_0)\vert} + \sqrt{\vert\log(\delta)\vert}\right)\\
\leq \left(\frac{\sqrt{2\times (1 + \vert\log(c_0)\vert + \vert \log(C_0)\vert)}}{c_0} + \frac{4\sqrt{2}C_0}{c_0^2}(2 + \sqrt{\vert\log(c_0)\vert+\vert\log(C_0)\vert})\right)\\
\times \delta\left(\sqrt{\log(n)} + 1 + \sqrt{\vert\log(\delta)\vert}\right)
\end{aligned}
\label{C_0}
\end{equation}
and we prove \eqref{FirGaussian}.

Without loss of generality, assume $\epsilon$ is independent of $\xi$. Similar to Chernozhukov et al.\cite{AntiConcentration}, for any $0\leq t\leq 1$, we define random variables $Z_i(t) = \sqrt{t}\epsilon_i + \sqrt{1 - t}\xi_i$. According to \eqref{allInequ}, \eqref{Stein}, and theorem 2.27 in Folland \cite{RealAnalysis},
\begin{equation}
\begin{aligned}
\mathbf{E}h_{\tau, \psi, x}(\epsilon_1,...,\epsilon_n) - \mathbf{E}h_{\tau, \psi, x}(\xi_1,...,\xi_n)\\
=\frac{1}{2}\sum_{i = 1}^n\int_{0}^1 t^{-1/2}\mathbf{E}(\frac{\partial h_{\tau, \psi,x}(Z_1(t),...Z_n(t))}{\partial x_i}\epsilon_i)dt - \frac{1}{2}\sum_{i = 1}^n\int_0^1(1 - t)^{-1/2}\mathbf{E}(\frac{\partial h_{\tau,\psi,x}(Z_1(t),...,Z_n(t))}{\partial x_i}\xi_i)dt\\
=\frac{1}{2}\sum_{i = 1}^n\sum_{k = 1}^n(\sigma_{ik} - \sigma^*_{ik})\int_0^1\mathbf{E}\frac{\partial^2 h_{\tau,\psi,x}(Z_1(t),...,Z_n(t))}{\partial x_i\partial x_k}dt\\
\Rightarrow \vert\mathbf{E}h_{\tau, \psi, x}(\epsilon_1,...,\epsilon_n) - \mathbf{E}h_{\tau, \psi, x}(\xi_1,...,\xi_n)\vert\leq \Delta\times(g_*\psi^2 + g_*\psi\tau)
\end{aligned}
\label{Hs}
\end{equation}
For any $x\in\mathbf{R}$ and given $\tau,\psi > 0$, define $t = \frac{1}{\psi} + \frac{\log(2n)}{\tau}$, then
\begin{equation}
\begin{aligned}
Prob\left(\max_{i = 1,2,...,n}\vert\epsilon_i\vert\leq x\right) - Prob\left(\max_{i = 1,2,...,n}\vert\xi_i\vert\leq x\right)\\
\leq Prob\left(\max_{i = 1,2,...,n}\vert\epsilon_i\vert\leq x - t\right) + Ct(\sqrt{\log(n)} + \sqrt{\vert\log(t)\vert} + 1) - Prob\left(\max_{i = 1,2,...,n}\vert\xi_i\vert\leq x\right)\\
\leq \mathbf{E}h_{\tau, \psi, x - \frac{1}{\psi}}(\epsilon_1,...,\epsilon_n) - \mathbf{E}h_{\tau, \psi, x - \frac{1}{\psi}}(\xi_1,...,\xi_n)+ Ct(\sqrt{\log(n)} + \sqrt{\vert\log(t)\vert} + 1)\\
Prob\left(\max_{i = 1,2,...,n}\vert\epsilon_i\vert\leq x\right) - Prob\left(\max_{i = 1,2,...,n}\vert\xi_i\vert\leq x\right)\\
\geq Prob\left(\max_{i = 1,2,...,n}\vert\epsilon_i\vert\leq x + t\right) - Ct( \sqrt{\log(n)} + \sqrt{\vert\log(t)\vert} + 1) - Prob\left(\max_{i = 1,2,...,n}\vert\xi_i\vert\leq x\right)\\
\geq \mathbf{E}h_{\tau, \psi, x + \frac{\log(2n)}{\tau}}(\epsilon_1,...,\epsilon_n) - \mathbf{E}h_{\tau, \psi, x + \frac{\log(2n)}{\tau}}(\xi_1,...,\xi_n)- Ct(\sqrt{\log(n)} + \sqrt{\vert\log(t)\vert} + 1)\\
\Rightarrow \sup_{x\in\mathbf{R}}\vert Prob\left(\max_{i = 1,2,...,n}\vert\epsilon_i\vert\leq x\right) - Prob\left(\max_{i = 1,2,...,n}\vert\xi_i\vert\leq x\right)\vert\\
\leq \Delta\times(g_*\psi^2 + g_*\psi\tau) +  Ct(\sqrt{\log(n)} + \sqrt{\vert\log(t)\vert} + 1)
\end{aligned}
\label{Separa}
\end{equation}
We choose $\tau = \psi = \left(1 + \log^{3/2}(n)\right)/\Delta^{1/3}$, then $\exists$ a constant $C _1> 0$ such that $\frac{1}{C_1}\frac{\Delta^{1/3}}{1 + \log^{1/2}(n)}\leq t = \Delta^{1/3}\left(\frac{1 + \log(2)}{1 + \log^{3/2}(n)}\right) + \frac{\Delta^{1/3}\times \log(n)}{1 + \log^{3/2}(n)}\leq \frac{C_1\Delta^{1/3}}{1 + \log^{1/2}(n)}$ for $n = 1,2,...$;  and we prove \eqref{Continuity}.
\end{proof}
Lemma \ref{GaussianApprox} approximates the distribution of the linear combinations of errors $\epsilon$ by a multivariate normal random vector. This lemma can be used to derive the Gaussian approximation theorem for the estimator $\widehat{\beta}$.

\begin{lemma}[Gaussian approximation theorem]
\label{GaussianApprox}
Suppose $\epsilon_i, i = 1,2,...,n$ satisfy assumption 2, and $(\gamma_{ij})_{i =1,...,p_1, j =1,...,n}\in\mathbf{R}^{p_1\times n}$ has rank $p_1$, $p_1 = O(1),\ p_1\leq n$. Besides, suppose $\exists$ constants $ 0<c_\gamma\leq C_\gamma<\infty$ such that $c_\gamma\leq\sum_{j = 1}^n\gamma^2_{ij}\leq C_\gamma$ for $\forall i =1,2,...,p_1$  and $n = 1,2,...$. Define $\gamma^* = \max_{i = 1,...,p_1, j =1,...,n}\vert\gamma_{ij}\vert $ and  assume $\gamma^* = o(n^{-1/4}\log^{-z}(n))$, here $z = \max(\frac{9}{2}, \frac{3\alpha_\epsilon}{2\alpha_\epsilon-2})$. Then
\begin{equation}
\sup_{x\in\mathbf{R}}\vert Prob\left(\max_{i = 1,2,...,p_1}\vert\sum_{j = 1}^n\gamma_{ij}\epsilon_j\vert\leq x\right) - Prob\left(\max_{i = 1,2,...,p_1}\vert\sum_{j = 1}^n \gamma_{ij}\xi_j\vert\leq x\right)\vert = o(1)
\label{GauX}
\end{equation}
Here $\xi = (\xi_1,...,\xi_n)^T$ has multivariate normal distribution, $\mathbf{E}\xi = 0$ and $\mathbf{E}\xi\xi^T  = \Sigma$, the covariance matrix of $\epsilon$.
\end{lemma}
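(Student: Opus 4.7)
The plan is to adapt the Chernozhukov-type Gaussian approximation machinery \cite{chernozhukov2013, AntiConcentration} to our non-stationary, weakly dependent setting. Write $Z_i = \sum_{j=1}^n\gamma_{ij}\epsilon_j$ and $Z'_i = \sum_{j=1}^n\gamma_{ij}\xi_j$; both vectors share the same covariance $\Gamma$ with entries $\Gamma_{ii'} = \sum_{j,k}\gamma_{ij}\gamma_{i'k}\sigma_{jk}$, whose diagonal is pinched between $c_\Sigma c_\gamma$ and a constant multiple of $C_\gamma$ by assumption 2 together with the bound on $\Sigma$'s largest eigenvalue derived in \eqref{LinearComb}. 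Using the smooth surrogate $h_{\tau,\psi,x}$ from Appendix \ref{UsefulRes}, combined with the anti-concentration inequality \eqref{FirGaussian} applied to $(Z'_1,\ldots,Z'_{p_1})$, the supremum in \eqref{GauX} reduces to uniformly bounding
\begin{equation*}
\bigl|\mathbf{E}h_{\tau,\psi,x}(Z_1,\ldots,Z_{p_1}) - \mathbf{E}h_{\tau,\psi,x}(Z'_1,\ldots,Z'_{p_1})\bigr|
\end{equation*}
over $x\in\mathbf{R}$ and then optimizing the smoothing parameters $\tau,\psi$.

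The dependence in $\epsilon$ is decoupled through the martingale representation at the start of Appendix \ref{UsefulRes}: write $\epsilon_j = \sum_{s=0}^\infty D_{j,s}$ with $D_{j,s} = \mathbf{E}\epsilon_j|\mathcal{F}_{j,s}-\mathbf{E}\epsilon_j|\mathcal{F}_{j,s-1}$, truncate at a level $L=L_n\to\infty$, and set $\tilde\epsilon_j = \mathbf{E}\epsilon_j|\mathcal{F}_{j,L}$, which is measurable with respect to $e_{j-L},\ldots,e_j$ so that $\{\tilde\epsilon_j\}_j$ is $L$-dependent. Assumption 2 together with \eqref{Deltas} yields $\Vert\epsilon_j-\tilde\epsilon_j\Vert_m = O(L^{-\alpha_\epsilon+1})$, and \eqref{LinearComb} then shows $\max_i|\sum_j\gamma_{ij}(\epsilon_j-\tilde\epsilon_j)|$ is $O_p(p_1^{1/m}\sqrt{C_\gamma}\,L^{-\alpha_\epsilon+1})$, negligible once $L$ is a suitable power of $n$. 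The same decay governs the discrepancy between $\mathbf{E}\tilde Z\tilde Z^T$ and $\Gamma$.

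For the truncated sums $\tilde Z_i = \sum_j\gamma_{ij}\tilde\epsilon_j$ one partitions $\{1,\ldots,n\}$ into alternating big blocks of length $B\gg L$ and small separating blocks of length $L$, so that big-block contributions are jointly independent. A telescoping Lindeberg swap then replaces each big-block contribution by an independent Gaussian vector with matching covariance; the per-swap error is controlled using the third-order bounds in \eqref{allInequ}, namely $\sum|\partial^3 h_{\tau,\psi,x}|\leq g_*(\psi^3+6\psi\tau^2+6\tau\psi^2)$, multiplied by the block's cubic moment, which Burkholder's inequality (as in \eqref{DeltaM}) bounds in terms of $\gamma^*$ and $\sqrt{B}$. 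The resulting independent-block Gaussian approximation has a covariance that differs from $\Gamma$ only by the dropped small-block variance and truncation cross-terms; invoking \eqref{Continuity} of Lemma \ref{GaussianRes} then identifies it with the target Gaussian $(Z'_1,\ldots,Z'_{p_1})$.

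The main obstacle is the simultaneous calibration of five error sources: (i) the smoothing gap of size $(\log(2p_1)/\tau+1/\psi)$ amplified by the anti-concentration factor $\sqrt{\log p_1}+\sqrt{|\log(\cdot)|}$; (ii) the truncation error $L^{-\alpha_\epsilon+1}$; (iii) the small-block boundary contribution; (iv) the Gaussian covariance-matching error absorbed into \eqref{Continuity}; and (v) the cubic Lindeberg swap error, of magnitude of order $\gamma^*\cdot(n/B)\cdot B^{3/2}\cdot(\psi^3+\psi\tau^2+\tau\psi^2)$. The hypothesis $\gamma^* = o(n^{-1/4}\log^{-z}(n))$ with $z=\max(9/2,3\alpha_\epsilon/(2\alpha_\epsilon-2))$ is exactly what allows the choice $\tau\sim\psi\sim(\gamma^*)^{-1/3}(\log n)^{-\kappa}$ together with $L$ and $B$ chosen as fractional powers of $n$ to render every error $o(1)$ uniformly in $x$: the exponent $1/4$ comes from the cubic swap, the $9/2$ from matching powers of $\log n$ between smoothing and anti-concentration, and the factor $\alpha_\epsilon/(\alpha_\epsilon-1)$ from the truncation-versus-swap trade-off.
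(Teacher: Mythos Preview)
Your outline mirrors the paper's proof: smooth-max reduction via $h_{\tau,\psi,x}$ and anti-concentration, martingale truncation to an $L$-dependent sequence, big-block/small-block splitting, a Lindeberg swap on the independent big blocks, and a final Gaussian covariance comparison. The architecture is right, and the paper uses exactly this scheme with $k=\lfloor\sqrt{n}\rfloor$, $\psi=\tau=V^{(\alpha_\epsilon-1)/(3\alpha_\epsilon+3)}$, $s=\lfloor V^{2/(\alpha_\epsilon+1)}\log^{3/(\alpha_\epsilon-1)}(n)\rfloor$ where $1/V=\gamma^*n^{1/4}\log^z(n)$.

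There is, however, a genuine gap in your step (v). The swap error you quote, $\gamma^*\cdot(n/B)\cdot B^{3/2}\cdot(\psi^3+\psi\tau^2+\tau\psi^2)$ --- or the variant $(\gamma^*)^3(n/B)B^{3/2}(\psi^3+\cdots)$ that results from cubing the per-block Burkholder bound $O(\gamma^*\sqrt{B})$ and summing over $n/B$ blocks --- cannot be balanced against the smoothing gap. With $\gamma^*\sim n^{-1/4}$ either form is at least of order $n^{1/4}\sqrt{B}\,\psi^3$, which forces $\psi\to 0$ and kills the anti-concentration step. The paper's proof does not sum the crude per-block cubic bound; instead it uses the \emph{global} constraint $\sum_j\gamma_{ij}^2\leq C_\gamma$ to write
\[
\sum_{l}\Bigl(\sum_{j\in\text{block }l}\gamma_{ij}^2\Bigr)^{3/2}\ \leq\ \max_l\Bigl(\sum_{j\in\text{block }l}\gamma_{ij}^2\Bigr)^{1/2}\cdot\sum_{l}\sum_{j\in l}\gamma_{ij}^2\ \leq\ \gamma^*\sqrt{B}\cdot C_\gamma,
\]
so the total Lindeberg error is only $O\bigl((\psi^3+\psi\tau^2+\tau\psi^2)\,\gamma^*\sqrt{B}\bigr)$, with \emph{no} factor $n/B$. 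Taking $B\sim\sqrt{n}$ then gives $\gamma^*n^{1/4}\psi^3$, which is precisely where the $n^{-1/4}$ hypothesis on $\gamma^*$ enters. A secondary slip: assumption~2 plus \eqref{Deltas} gives $\Vert\sum_j\gamma_{ij}(\epsilon_j-\tilde\epsilon_j)\Vert_m=O(L^{-\alpha_\epsilon})$, not $O(L^{-\alpha_\epsilon+1})$; the sharper rate is what makes the choice of $s$ above compatible with the $\tfrac{3\alpha_\epsilon}{2\alpha_\epsilon-2}$ part of $z$.
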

\begin{proof}
For any given $\psi, \tau > 0$, define $t = \frac{1}{\psi} + \frac{\log(2p_1)}{\tau}$. Assumption 2, \eqref{LinearComb}, and \eqref{Separa} imply
\begin{equation}
\begin{aligned}
\sup_{x\in\mathbf{R}}\vert Prob\left(\max_{i = 1,2,...,p_1}\vert\sum_{j = 1}^n\gamma_{ij}\epsilon_j\vert\leq x\right) - Prob\left(\max_{i = 1,2,...,p_1}\vert\sum_{j = 1}^n\gamma_{ij}\xi_j\vert\leq x\right)\vert\\
\leq \sup_{x\in\mathbf{R}}\vert\mathbf{E}h_{\tau,\psi,x}(\sum_{j = 1}^n\gamma_{1j}\epsilon_j,...,\sum_{j = 1}^n\gamma_{p_1j}\epsilon_j) - \mathbf{E}h_{\tau,\psi,x}(\sum_{j = 1}^n\gamma_{1j}\xi_j,...,\sum_{j = 1}^n\gamma_{p_1j}\xi_j)\vert\\
+ O\left(t(\sqrt{\log(p_1)} + \sqrt{\vert\log(t)\vert} + 1)\right)
\end{aligned}
\label{ProofHead}
\end{equation}
For any integer $s > 0$, \eqref{DeltaM} implies
\begin{equation}
\begin{aligned}
\Vert\sum_{j = 1}^n\gamma_{ij}(\epsilon_j - \mathbf{E}\epsilon_j|\mathcal{F}_{j,s})\Vert_m\leq \sum_{k = s}^\infty\Vert\sum_{j = 1}^n\gamma_{ij}(\mathbf{E}\epsilon_j|\mathcal{F}_{j,k + 1} - \mathbf{E}\epsilon_j|\mathcal{F}_{j, k})\Vert_m\\
\leq C\sqrt{\sum_{j = 1}^n\gamma^2_{ij}}\sum_{k = s}^\infty\max_{j = 1,2,...,n}\delta_{j,k,m}\\
\Rightarrow \max_{i = 1,2,...,p_1}\Vert\sum_{j = 1}^n\gamma_{ij}(\epsilon_j - \mathbf{E}\epsilon_j|\mathcal{F}_{j,s})\Vert_m = O\left(\frac{1}{(s + 1)^{\alpha_\epsilon}}\right)
\end{aligned}
\end{equation}
Here $C$ is a constant. Therefore,
\begin{equation}
\begin{aligned}
\sup_{x\in\mathbf{R}}\vert\mathbf{E}h_{\tau,\psi,x}(\sum_{j = 1}^n\gamma_{1j}\epsilon_j,...,\sum_{j = 1}^n\gamma_{p_1j}\epsilon_j) - \mathbf{E}h_{\tau,\psi,x}(\sum_{j = 1}^n\gamma_{1j}\mathbf{E}\epsilon_j|\mathcal{F}_{j,s},...,\sum_{j = 1}^n\gamma_{p_1j}\mathbf{E}\epsilon_j|\mathcal{F}_{j,s})\vert\\
\leq g_*\psi\mathbf{E}\max_{i = 1,2,...,p_1}\vert\sum_{j = 1}^n\gamma_{ij}(\epsilon_j - \mathbf{E}\epsilon_j|\mathcal{F}_{j,s})\vert\\
\leq g_*\psi\sum_{i = 1}^{p_1}\Vert\sum_{j = 1}^n\gamma_{ij}(\epsilon_j - \mathbf{E}\epsilon_j|\mathcal{F}_{j,s})\Vert_m = O\left(\frac{\psi}{(s+1)^{\alpha_\epsilon}}\right)
\end{aligned}
\label{Firpart}
\end{equation}
For any integer $k > s$, define the big block $S_{il} = \sum_{j = (l - 1)\times(k+s) + 1}^{(l - 1)\times(k+s) +k}\gamma_{ij}\mathbf{E}\epsilon_j|\mathcal{F}_{j,s}$ and the small block $s_{il} = \sum_{j = (l - 1)\times (k+s) + k + 1}^{l\times(k+s)}\gamma_{ij}\mathbf{E}\epsilon_j|\mathcal{F}_{j,s}$ for $l = 1,2,..., r = \lfloor \frac{n}{k+s}\rfloor$. Here $\lfloor x\rfloor$ denotes the largest integer that is smaller than or equal to $x$. If $(k+s)r + k \geq n$, we define $S_{i(r+1)} = \sum_{j = (k+s)r + 1}^n\gamma_{ij}\mathbf{E}\epsilon_j|\mathcal{F}_{j,s}$ and $s_{i(r + 1)} = 0$. If $(k+s)r + k < n$, we define $S_{i(r+1)} = \sum_{j = (k+s)r + 1}^{j = (k+s)r + k}\gamma_{ij}\mathbf{E}\epsilon_j|\mathcal{F}_{j,s}$ and $s_{i(r+1)} = \sum_{j = (k+s)r + k + 1}^{n}\gamma_{ij}\mathbf{E}\epsilon_j|\mathcal{F}_{j,s}$. $(S_{1l},...,S_{p_1l})^T,\ l = 1,2,...,r + 1$ are independent with each other; $(s_{1l},...,s_{p_1l})^T, l = 1,2,...,r+1$ are independent with each other; and $\sum_{j = 1}^n\gamma_{ij}\mathbf{E}\epsilon_j|\mathcal{F}_{j,s} = \sum_{j = 1}^{r+1}(S_{ij} +s_{ij})$. For $\mathbf{E}\epsilon_j|\mathcal{F}_{j,s} = \mathbf{E}\epsilon_j|\mathcal{F}_{j,0} + \sum_{l = 1}^s \mathbf{E}\epsilon_j|\mathcal{F}_{j,l} - \mathbf{E}\epsilon_j|\mathcal{F}_{j,l-1}$, \eqref{DeltaM} and \eqref{LinearComb} imply
\begin{equation}
\begin{aligned}
\vert\mathbf{E}h_{\tau, \psi, x}(\sum_{j = 1}^{r+1}S_{1j}+s_{1j},...,\sum_{j=1}^{r+1}S_{p_1j} + s_{p1j}) - \mathbf{E}h_{\tau,\psi,x}(\sum_{j = 1}^{r + 1}S_{1j},...,\sum_{j = 1}^{r+1}S_{p_1j})\vert\\
\leq g_*\psi\sum_{i = 1}^{p_1}\Vert\sum_{j = 1}^{r+1}s_{ij}\Vert_m
=O\left(\psi\max_{i = 1,...,p_1}\sqrt{\sum_{j = 1}^{r+1}\sum_{l = (j-1)\times(k+s)+k+1}^{j\times(k+s)}\gamma^2_{il}}\right)\\
= O\left(\psi\times\gamma^*\sqrt{\frac{ns}{k}}\right)
\end{aligned}
\label{Secc}
\end{equation}
Here we define $\gamma_{ij} = 0$ for $i = 1,2,...,p_1, j>n$.

Define $\xi_{j,s}, j = 1,2,...,n$ as the joint normal random variables with mean $0$ and the same covariance matrix as $\mathbf{E}\epsilon_j|\mathcal{F}_{j,s},\ j = 1,2,...,n$; and are independent of $e_i, i = ...,-1,0,1,...$. Define $S^*_{il} = \sum_{j = (l - 1)(k+s)+1}^{(l-1)(k+s) + k}\gamma_{ij}\xi_{j,s}$ for $l = 1, ..., r$ and $S^*_{ir+1} = \sum_{j = r(k+s)+1}^{\min(n, r(k+s)+k)}\gamma_{ij}\xi_{j,s}$. Define $H_{il} = \sum_{j = 1}^{l-1}S_{ij} + \sum_{j = l+1}^{r+1}S^*_{ij}$, then $H_{il} + S_{il} = H_{il+1}+S_{il+1}^*$.  $S^*_{il}, i = 1,...,p_1, l = 1,...,r+1$ has joint normal distribution(we treat a constant as a degenerated normal random variable) and $\mathbf{E}S^*_{il}S^*_{jk} = \mathbf{E}S_{il}S_{jk} = 0$ for $l\neq k$, so $(S^*_{1l},...,S^*_{p_1l})^T$ are independent with each other. From Taylor's theorem,
\begin{equation}
\begin{aligned}
\vert\mathbf{E}\left(h_{\tau,\psi,x}(H_{1j} + S_{1j},...,H_{p_1j} + S_{p_1j}) - h_{\tau,\psi,x}(H_{1j} + S^*_{1j},...,H_{p_1j} + S^*_{p_1j})\right)|H_{1j},...,H_{p_1j}\vert\\
\leq\vert\sum_{l = 1}^{p_1}\frac{\partial h_{\tau,\psi,x}(H_{1j},...,H_{p_1j})}{\partial x_l}\mathbf{E}(S_{lj} - S^*_{lj})\vert\\ + \vert\frac{1}{2}\sum_{l_1 = 1}^{p_1}\sum_{l_2 = 1}^{p_1}\frac{\partial^2 h_{\tau,\psi,x}(H_{1j},...,H_{p_1j})}{\partial x_{l_1}\partial x_{l_2}}\mathbf{E}(S_{l_1j}S_{l_2j} - S^*_{l_1j}S^*_{l_2j})\vert\\
+g_*(\psi^3 + \tau\psi^2 + \psi\tau^2)\mathbf{E}(\max_{i = 1,2,...,p_1}\vert S_{ij}\vert^3 + \max_{i = 1,2,...,p_1}\vert S_{ij}^*\vert^3)\\
\leq g_*(\psi^3 + \tau\psi^2 + \psi\tau^2)\sum_{i = 1}^{p_1}\left(\Vert S_{ij}\Vert_m^3 + \Vert S^*_{ij}\Vert_m^3\right)\\
\leq C\left((\psi^3 + \tau\psi^2 + \psi\tau^2)\times \max_{i =1,2,...,p_1}\left(\sum_{l = (j - 1)(k+s)+1}^{(j-1)(k+s)+k}\gamma^2_{il}\right)^{3/2}\right)
\end{aligned}
\end{equation}
Here $C$ is a constant. Since $\max_{i = 1,...,p_1}\sum_{j = 1}^n\gamma^2_{ij}\leq C_\gamma$,
\begin{equation}
\begin{aligned}
\sup_{x\in\mathbf{R}}\vert\mathbf{E}h_{\tau,\psi,x}(\sum_{j = 1}^{r+1}S_{1j},...,\sum_{j = 1}^{r+1}S_{p_1j}) - \mathbf{E}h_{\tau, \psi, x}(\sum_{j = 1}^{r+1}S_{1j}^*,...,\sum_{j = 1}^{r+1}S_{p_1j}^*)\vert\\
\leq \sum_{j = 1}^{r+1}\sup_{x\in\mathbf{R}}\vert \mathbf{E}h_{\tau,\psi,x}(H_{1j} + S_{1j},...,H_{p_1j} + S_{p_1j}) - \mathbf{E}h_{\tau,\psi,x}(H_{1j} + S^*_{1j},...,H_{p_1j} + S^*_{p_1j})\vert\\
\leq C(\psi^3 + \tau\psi^2 + \psi\tau^2)\sum_{j = 1}^{r+1}\sum_{i = 1}^{p_1}\left(\sum_{l = (j - 1)(k+s)+1}^{(j-1)(k+s)+k}\gamma^2_{il}\right)^{3/2} \\
= O\left((\psi^3 + \tau\psi^2 + \psi\tau^2)\times \max_{i =1,...,p_1,j=1,...,r+1}\sqrt{\sum_{l = (j - 1)(k+s)+1}^{(j-1)(k+s)+k}\gamma^2_{il}}\right)
\end{aligned}
\end{equation}
which has order $O\left((\psi^3 + \tau\psi^2 + \psi\tau^2)\times \gamma^*\sqrt{k}\right)$. We define $s^*_{il} = \sum_{j = (l-1)(k+s)+k+1}^{l(k+s)}\gamma_{ij}\xi_{j,s}$ for $l = 1,2,...,r$, and $s^*_{ir+1} = \sum_{j = r(k+s)+k+1}^n\gamma_{ij}\xi_{j,s}$.
For $s^*_{il}$ has normal distribution,
\begin{equation}
\begin{aligned}
\vert\mathbf{E}h_{\tau,\psi,x}(\sum_{j = 1}^{r+1}S^*_{1j},...,\sum_{j = 1}^{r+1}S^*_{p_1j}) - \mathbf{E}h_{\tau,\psi,x}(\sum_{j = 1}^n\gamma_{1j}\xi_{j,s},...,\sum_{j=1}^n\gamma_{p_1j}\xi_{j,s})\vert\\
\leq g_*\psi\sum_{i = 1}^{p_1}\Vert\sum_{j = 1}^{r+1}s_{ij}^*\Vert_m
=O\left(\psi\max_{i = 1,2,...,p_1}\Vert\sum_{j = 1}^{r+1}s_{ij}\Vert_2\right)
\end{aligned}
\end{equation}
According to \eqref{Secc}, this has order $O\left(\psi\times\gamma^*\sqrt{\frac{ns}{k}}\right)$. From \eqref{Hs}, \eqref{Decay}, and 0.9.7 in \cite{matrix},
\begin{equation}
\begin{aligned}
\vert\mathbf{E}h_{\beta,\psi,x}(\sum_{j = 1}^n\gamma_{1j}\xi_{j,s},...,\sum_{j=1}^n\gamma_{p_1j}\xi_{j,s}) - \mathbf{E}h_{\beta, \psi, x}(\sum_{j = 1}^n\gamma_{1j}\xi_j,...,\sum_{j=1}^n\gamma_{p_1j}\xi_j)\vert\\
\leq (g_*\psi^2 + g_*\psi\tau)\times\max_{i,j=1,2,...,p_1}\vert\sum_{k_1=1}^n\sum_{k_2=1}^n\gamma_{ik_1}\gamma_{jk_2}(\mathbf{E}(\mathbf{E}\epsilon_{k_1}|\mathcal{F}_{k_1,s}\times \mathbf{E}\epsilon_{k_2}|\mathcal{F}_{k_2,s}) - \sigma_{k_1k_2})\vert\\
\leq (g_*\psi^2 + g_*\psi\tau)\max_{i,j=1,...,p_1}\sum_{\vert k_1 - k_2\vert\leq s}\vert\gamma_{ik_1}\gamma_{jk_2}\vert(\Vert\mathbf{E}\epsilon_{k_1}|\mathcal{F}_{k_1,s}\Vert_2\Vert\mathbf{E}\epsilon_{k_2}|\mathcal{F}_{k_2,s} - \epsilon_{k_2}\Vert_2 + \Vert\mathbf{E}\epsilon_{k_1}|\mathcal{F}_{k_1,s} - \epsilon_{k_1}\Vert_2\Vert\epsilon_{k_2}\Vert_2)\\
+C(g_*\psi^2 + g_*\psi\tau)\max_{i,j=1,...,p_1}\sum_{\vert k_1 - k_2\vert>s}\frac{\vert\gamma_{ik_1}\gamma_{jk_2}\vert}{(1 + \vert k_1 - k_2\vert)^{\alpha_\epsilon}}\\
= O\left((\psi^2+\psi\tau)\times\left(\max_{i =1,2,...,p_1}\sum_{k = 1}^n\gamma_{ik}^2\right)\times\left(\frac{1}{(1 + s)^{\alpha_\epsilon-1}}\right)\right)
\end{aligned}
\label{Zetas}
\end{equation}
Here $\sigma_{ij} = \mathbf{E}\epsilon_i\epsilon_j$.

Define $V$ such that $\frac{1}{V} = \gamma^*\times n^{1/4}\log^z(n)\to 0$. Choose $k = \lfloor\sqrt{n}\rfloor$, $\psi= \tau = V^{(\alpha_\epsilon-1)/(3\alpha_\epsilon+3)}$, and $s = \lfloor V^{2/(\alpha_\epsilon+1)}\log^{3/(\alpha_\epsilon-1)}(n)\rfloor$. Then we prove \eqref{GauX}.
\end{proof}

\begin{proof}[proof of theorem \ref{thm1}]
\begin{equation}
\begin{aligned}
\max_{i = 1,2,...,p}\sum_{l = 1}^n \left(\sum_{j = 1}^p q_{ij}\left(\frac{\lambda_j}{\lambda_j^2 + \rho_n} + \frac{\rho_n\lambda_j}{(\lambda_j^2 + \rho_n)^2}\right)p_{lj}\right)^2\\
= \max_{i = 1,2,...,p}\sum_{j = 1}^pq^2_{ij}\left(\frac{\lambda_j}{\lambda^2_j + \rho_n} + \frac{\rho_n\lambda_j}{(\lambda^2_j + \rho_n)^2}\right)^2\leq \frac{4}{\lambda^2_p}
\end{aligned}
\end{equation}
Define $s = (s_1,...,s_p)^T = Q^T\beta$, from lemma \ref{lemmaMoment}
\begin{equation}
\begin{aligned}
\widetilde{\beta}_i - \beta_i = -\rho^2_n\sum_{j = 1}^p \frac{q_{ij}s_j}{(\lambda^2_j + \rho_n)^2} + \sum_{j = 1}^p\sum_{l = 1}^n q_{ij}\left(\frac{\lambda_j}{\lambda_j^2 + \rho_n} + \frac{\rho_n\lambda_j}{(\lambda_j^2 + \rho_n)^2}\right)p_{lj}\epsilon_l\\
\Rightarrow \max_{i = 1,...,p}\vert\widetilde{\beta}_i - \beta_i\vert\\
\leq \max_{i = 1,...,p}\sqrt{\sum_{j = 1}^pq^2_{ij}}\times \frac{\rho^2_n\vertiii{\beta}_2}{\lambda^4_p} + \max_{i = 1,2,...,p}\vert\sum_{j = 1}^p\sum_{l = 1}^n q_{ij}\left(\frac{\lambda_j}{\lambda_j^2 + \rho_n} + \frac{\rho_n\lambda_j}{(\lambda_j^2 + \rho_n)^2}\right)p_{lj}\epsilon_l\vert\\
= O_p\left(n^{1/m-\eta}\right)
\end{aligned}
\label{betaS}
\end{equation}
Therefore,
\begin{equation}
\begin{aligned}
Prob\left(\widehat{\mathcal{N}}_{b_n}\neq \mathcal{N}_{b_n}\right)\leq Prob\left(\min_{i\in\mathcal{N}_{b_n}}\vert\widetilde{\beta}_i\vert\leq b_n\right) + Prob\left(\max_{i\not\in\mathcal{N}_{b_n}}\vert\widetilde{\beta}_i\vert > b_n\right)\\
\leq Prob\left(\min_{i\in\mathcal{N}_{b_n}}\vert\beta_i\vert - \max_{i\in\mathcal{N}_{b_n}}\vert\widetilde{\beta}_i - \beta_i\vert\leq b_n\right)\\
+ Prob\left(\max_{i\not\in\mathcal{N}_{b_n}}\vert\beta_i\vert+\max_{i\not\in\mathcal{N}_{b_n}}\vert\widetilde{\beta}_i - \beta_i\vert>b_n\right)
\end{aligned}
\end{equation}
From assumption 4, $\min_{i\in\mathcal{N}_{b_n}}\vert\beta_i\vert-b_n\geq (1/c_b-1)b_n$ and $b_n - \max_{i\not\in\mathcal{N}_{b_n}}\vert\beta_i\vert\geq (1-c_b)b_n$.  So $Prob\left(\widehat{\mathcal{N}}_{b_n}\neq \mathcal{N}_{b_n}\right) = o(1)$ is proved by \eqref{betaS}.

If $\widehat{\mathcal{N}}_{b_n} = \mathcal{N}_{b_n}$, we define $\{c_{ij}\}_{i = 1,...,p_1,j=1,...,p}$ as in section \ref{Prelim} with $b = b_n$ and $\gamma =(\gamma_1,...,\gamma_{p_1})^T= M\beta$. Then
\begin{equation}
\begin{aligned}
\widehat{\gamma}_i - \gamma_i  =\sum_{j\in\mathcal{N}_{b_n}}m_{ij}(\widetilde{\beta}_j - \beta_j) - \sum_{j\not\in\mathcal{N}_{b_n}}m_{ij}\beta_j\\
= -\rho^2_n\sum_{j = 1}^p\frac{c_{ij}s_j}{(\lambda^2_j + \rho_n)^2} + \sum_{j = 1}^p\sum_{l = 1}^n c_{ij}\left(\frac{\lambda_j}{\lambda_j^2 + \rho_n} + \frac{\rho_n\lambda_j}{(\lambda_j^2 + \rho_n)^2}\right)p_{lj}\epsilon_l - \sum_{j\not\in\mathcal{N}_{b_n}}m_{ij}\beta_j
\label{Mbeta}
\end{aligned}
\end{equation}
For $\max_{i = 1,...,p_1}\vert\rho^2_n\sum_{j = 1}^p\frac{c_{ij}s_j}{(\lambda^2_j + \rho_n)^2}\vert\leq \frac{\rho^2_n \sqrt{C_{\mathcal{M}}}}{\lambda^4_p}\times \vertiii{\beta}_2 = O(n^{\alpha_\beta-2\delta})$, and
\begin{equation}
\begin{aligned}
\max_{i = 1,2,...,p_1}\sum_{l = 1}^n \left(\sum_{j = 1}^p c_{ij}\left(\frac{\lambda_j}{\lambda_j^2 + \rho_n} + \frac{\rho_n\lambda_j}{(\lambda_j^2 + \rho_n)^2}\right)p_{lj}\right)^2\\
 = \max_{i = 1,2,...,p_1}\sum_{j = 1}^p c^2_{ij}\left(\frac{\lambda_j}{\lambda_j^2 + \rho_n} + \frac{\rho_n\lambda_j}{(\lambda_j^2 + \rho_n)^2}\right)^2\leq\frac{4C_{\mathcal{M}}}{\lambda^2_p}
\end{aligned}
\end{equation}
lemma \ref{lemmaMoment} implies $\max_{i = 1,2,...,p_1}\vert\widehat{\gamma}_i - \gamma_i\vert = O_p(n^{-\eta})$.

If $\widehat{\mathcal{N}}_{b_n} = \mathcal{N}_{b_n}$, we define $u_{ij} = \sum_{k\in\mathcal{N}_{b_n}}x_{ik}q_{kj}, i = 1,...,n, j = 1,...,p$,
\begin{equation}
\begin{aligned}
\widehat{\epsilon}_i - \epsilon_i  = -\sum_{j\in\mathcal{N}_{b_n}}x_{ij}(\widetilde{\beta}_j - \beta_j) +  \sum_{j\not\in\mathcal{N}_{b_n}}x_{ij}\beta_j
\\ = \rho_n^2\sum_{j = 1}^p \frac{u_{ij}s_j}{(\lambda^2_j + \rho_n)^2} - \sum_{j=1}^p\sum_{l=1}^nu_{ij}\left(\frac{\lambda_j}{\lambda_j^2 + \rho_n} + \frac{\rho_n\lambda_j}{(\lambda_j^2 + \rho_n)^2}\right)p_{lj}\epsilon_l + \sum_{j\not\in\mathcal{N}_{b_n}}x_{ij}\beta_j
\end{aligned}
\end{equation}
For $\max_{i =1,...,n}\rho_n^2\vert\sum_{j = 1}^p \frac{u_{ij}s_j}{(\lambda^2_j + \rho_n)^2}\vert\leq \frac{\rho^2_n\vertiii{\beta}_2}{\lambda^4_p}\times \max_{i=1,..,n}\sqrt{\sum_{j = 1}^pu^2_{ij}} =O(n^{\alpha_\beta-2\delta}\sqrt{\vert\mathcal{N}_{b_n}\vert})$, and
\begin{equation}
\begin{aligned}
\max_{i = 1,2,...,n}\sum_{l = 1}^n\left(\sum_{j = 1}^p u_{ij}\left(\frac{\lambda_j}{\lambda_j^2 + \rho_n} + \frac{\rho_n\lambda_j}{(\lambda_j^2 + \rho_n)^2}\right)p_{lj}\right)^2\\
\leq \frac{4}{\lambda^2_p}\max_{i = 1,2,...,n} \sum_{j\in\mathcal{N}_{b_n}}x^2_{ij} = O\left(\vert\mathcal{N}_{b_n}\vert\times n^{-2\eta}\right)
\end{aligned}
\end{equation}
from lemma \ref{lemmaMoment},
\begin{equation}
\max_{i =1,...,n}\vert\widehat{\epsilon}_i-\epsilon_i\vert  = o_p(n^{(1/m-\eta)/2}) + O_p(n^{\alpha_\beta-2\delta}\sqrt{\vert\mathcal{N}_{b_n}\vert}) +O_p(n^{1/m-\eta}\times \sqrt{\vert\mathcal{N}_{b_n}\vert})
\end{equation}
and we prove \eqref{ConsHalf}.
\end{proof}

\begin{proof}[proof of theorem \ref{Central}]
Define $v_{il} = \frac{1}{\tau_i}\sum_{j = 1}^p c_{ij}\left(\frac{\lambda_j}{\lambda^2_j + \rho_n} + \frac{\rho_n\lambda_j}{(\lambda^2_j + \rho_n)^2}\right)p_{lj}$, then $v_{il} = 0$ if $i\not\in\mathcal{M}$. If $\widehat{\mathcal{N}}_{b_n} = \mathcal{N}_{b_n}$, then $\widehat{\tau}_i  =\tau_i$. From \eqref{Mbeta} and Cauchy-Schwarz inequality,
\begin{equation}
\begin{aligned}
\max_{i = 1,...,p_1}\vert\frac{1}{\widehat{\tau}_i}\left(\widehat{\gamma}_i - \sum_{j = 1}^p m_{ij}\beta_j\right) - \sum_{l = 1}^n v_{il}\epsilon_l\vert\\
\leq \max_{i\in\mathcal{M}}\frac{\rho^2_n}{\tau_i}\vert\sum_{j = 1}^p \frac{c_{ij}s_j}{(\lambda^2_j+\rho_n)^2}\vert + \max_{i =1,...,p_1}\frac{1}{\tau_i}\vert\sum_{j\not\in\mathcal{N}_{b_n}}m_{ij}\beta_j\vert\\
\leq\max_{i\in\mathcal{M}}\frac{\rho^2_n}{\tau_i}\sqrt{\sum_{j=1}^p \frac{c^2_{ij}\lambda^2_j}{(\lambda^2_j + \rho_n)^2}}\sqrt{\sum_{j = 1}^p \frac{s^2_j}{\lambda^2_j(\lambda^2_j + \rho_n)^2}}  + \sqrt{n}\max_{i =1,...,p_1}\vert\sum_{j\not\in\mathcal{N}_{b_n}}m_{ij}\beta_j\vert\\
\leq \frac{\rho^2_n\vertiii{\beta}_2}{\lambda^3_p}+\sqrt{n}\max_{i =1,...,p_1}\vert\sum_{j\not\in\mathcal{N}_{b_n}}m_{ij}\beta_j\vert
\end{aligned}
\label{realCov}
\end{equation}
which has order $O(n^{\alpha_\beta+\eta-2\delta}) + o(1)$. Besides, if $i\in\mathcal{M}$, then for sufficiently large $n$
\begin{equation}
\begin{aligned}
\sum_{l=1}^n v^2_{il} = \frac{1}{\tau^2_i}\sum_{j=1}^p c^2_{ij}\left(\frac{\lambda_j}{\lambda^2_j + \rho_n} + \frac{\rho_n\lambda_j}{(\lambda_j^2 + \rho_n)^2}\right)^2\leq 1\\
\sum_{l=1}^n v^2_{il} = \frac{1}{1 + \frac{1}{n\sum_{j = 1}^pc^2_{ij}\left(\frac{\lambda_j}{\lambda^2_j + \rho_n} + \frac{\rho_n\lambda_j}{(\lambda^2_j + \rho_n)^2}\right)^2}}
\geq \frac{1}{1+\frac{1}{n\sum_{j = 1}^pc^2_{ij}/(4\lambda^2_j)}}\geq \frac{1}{1 + \frac{4C^2_\lambda}{c_{\mathcal{M}}}}>0
\label{Vs}
\end{aligned}
\end{equation}
Assumption 6 implies that the matrix $\{v_{il}\}_{i\in\mathcal{M},l = 1,...,n}$ has rank $\vert\mathcal{M}\vert$. Define $t = \frac{\rho^2_n\vertiii{\beta}_2}{\lambda^3_p}+\max_{i =1,...,p_1}\sqrt{n}\vert\sum_{j\not\in\mathcal{N}_{b_n}}m_{ij}\beta_j\vert$,
lemma \ref{GaussianRes} and lemma \ref{GaussianApprox} imply
\begin{equation}
\begin{aligned}
Prob\left(\max_{i = 1,...,p_1}\frac{\vert\widehat{\gamma}_i - \sum_{j = 1}^p m_{ij}\beta_j\vert}{\widehat{\tau}_i}\leq x\right) - Prob\left(\max_{i\in\mathcal{M}}\vert\sum_{l=1}^nv_{il}\xi_l\vert\leq x\right)\\
\leq Prob\left(\widehat{\mathcal{N}}_{b_n}
\neq \mathcal{N}_{b_n}\right)
+ \vert Prob\left(\max_{i\in\mathcal{M}}\vert\sum_{l = 1}^n v_{il}\epsilon_l\vert\leq x + t\right) - Prob\left(\max_{i\in\mathcal{M}}\vert\sum_{l = 1}^nv_{il}\xi_l\vert\leq x+t\right)\vert\\
+ Ct(1+\sqrt{\log(p_1)}+\sqrt{\vert\log(t)\vert})\\
Prob\left(\max_{i = 1,...,p_1}\frac{\vert\widehat{\gamma}_i - \sum_{j = 1}^p m_{ij}\beta_j\vert}{\widehat{\tau}_i}\leq x\right) - Prob\left(\max_{i\in\mathcal{M}}\vert\sum_{l=1}^nv_{il}\xi_l\vert\leq x\right)\\
\geq -Prob\left(\widehat{\mathcal{N}}_{b_n}\neq \mathcal{N}_{b_n}\right)
-\vert Prob\left(\max_{i\in\mathcal{M}}\vert\sum_{l = 1}^n v_{il}\epsilon_l\vert\leq x - t\right) - Prob\left(\max_{i\in\mathcal{M}}\vert\sum_{l = 1}^nv_{il}\xi_l\vert\leq x - t\right)\vert\\
 - Ct(1+\sqrt{\log(p_1)}+\sqrt{\vert\log(t)\vert})\\
\Rightarrow \sup_{x\in\mathbf{R}}\vert Prob\left(\max_{i = 1,...,p_1}\frac{\vert\widehat{\gamma}_i - \sum_{j = 1}^p m_{ij}\beta_j\vert}{\widehat{\tau}_i}\leq x\right) - Prob\left(\max_{i\in\mathcal{M}}\vert\sum_{l=1}^nv_{il}\xi_l\vert\leq x\right)\vert\\
\leq Prob\left(\widehat{\mathcal{N}}_{b_n}\neq \mathcal{N}_{b_n}\right)
 + \sup_{x\in\mathbf{R}}\vert Prob\left(\max_{i\in\mathcal{M}}\vert\sum_{l = 1}^n v_{il}\epsilon_l\vert\leq x\right) - Prob\left(\max_{i\in\mathcal{M}}\vert\sum_{l = 1}^nv_{il}\xi_l\vert\leq x\right)\vert\\
 + Ct(1+\sqrt{\log(p_1)}+\sqrt{\vert\log(t)\vert})
\end{aligned}
\end{equation}
and we prove \eqref{Cent}. Here $C$ is a constant.
\end{proof}
\section{Proof of theorem \ref{thmBootS}}
\label{app3}
Errors in the linear model \eqref{linearModel} can be dependent, non-stationary and heteroskedastic. So it is hopeless to estimate the errors' variance and covariance. However, it is still possible to make a consistent estimation of the estimator $\widehat{\gamma} = M\widehat{\beta}$'s variances.
Lemma \ref{lemmaCov} justifies this idea.
\begin{lemma}
Suppose random variables $\epsilon_i, i = 1,2,...,n$ satisfy assumption 2; $(\gamma_{ij})_{i =1,...,p_1, j =1,...,n}\in\mathbf{R}^{p_1\times n}$ satisfy conditions in lemma \ref{GaussianApprox}; the kernel function $K$ satisfies assumption 7;
and $k_n > 0$ is a chosen bandwidth. Then
\begin{equation}
\begin{aligned}
\max_{i,j = 1,...,p_1}\vert\sum_{s_1 = 1}^n\sum_{s_2 = 1}^n\gamma_{is_1}\gamma_{js_2}K\left(\frac{s_1-s_2}{k_n}\right)\epsilon_{s_1}\epsilon_{s_2} - \sum_{s_1 = 1}^n\sum_{s_2=1}^n\gamma_{is_1}\gamma_{js_2}\sigma_{s_1s_2}\vert\\
 = o_p(k_n\times n^{-1/4}\log^{-z}(n)) + O_p(v_n)
\end{aligned}
\label{Cov}
\end{equation}
Here $\sigma_{s_1s_2}, s_1, s_2 = 1,...,n$ are defined in section \ref{Prelim} and $z = \max(\frac{9}{2}, \frac{3\alpha_\epsilon}{2\alpha_\epsilon-2})$,
\begin{equation}
v_n =
\begin{cases}
k_n^{1-\alpha_\epsilon}\ \text{if } 1<\alpha_\epsilon<2\\
\log(k_n)/k_n\ \text{if } \alpha_\epsilon = 2\\
1/k_n\ \text{if }\alpha_\epsilon>2
\end{cases}
\end{equation}
\label{lemmaCov}
\end{lemma}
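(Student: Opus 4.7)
The plan is to decompose $V_{ij} - C_{ij}$, with $V_{ij} := \sum_{s_1,s_2}\gamma_{is_1}\gamma_{js_2}K((s_1-s_2)/k_n)\epsilon_{s_1}\epsilon_{s_2}$ and $C_{ij} := \sum_{s_1,s_2}\gamma_{is_1}\gamma_{js_2}\sigma_{s_1s_2}$, into a deterministic bias $B_{ij} := \mathbf{E}V_{ij} - C_{ij}$ and a centered stochastic error $S_{ij} := V_{ij} - \mathbf{E}V_{ij}$, then bound each piece at the advertised rate. Because $p_1 = O(1)$, passing from a single $(i,j)$ to the maximum over $i,j = 1,\dots,p_1$ costs nothing asymptotically, so the problem reduces to an $(i,j)$-uniform bound.

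For the bias I would write $B_{ij} = \sum_{s_1,s_2}\gamma_{is_1}\gamma_{js_2}[K((s_1-s_2)/k_n) - 1]\sigma_{s_1s_2}$ and split the sum at a lag cut-off $L$. On $|s_1-s_2|\le L$, a Lipschitz estimate coming from $K \in C^1$ and $K(0)=1$ gives $|K((s_1-s_2)/k_n)-1| \le C|s_1-s_2|/k_n$; combined with the polynomial covariance decay $|\sigma_{s_1s_2}| \le C/(1+|s_1-s_2|)^{\alpha_\epsilon}$ established in \eqref{Decay} and the lag-wise Cauchy--Schwarz bound $|\sum_s \gamma_{is}\gamma_{j(s+h)}| \le C_\gamma$, this contributes $C/k_n$, $C\log(L)/k_n$, or $CL^{2-\alpha_\epsilon}/k_n$ according as $\alpha_\epsilon > 2$, $=2$, or $1 < \alpha_\epsilon < 2$. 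On $|s_1-s_2| > L$ the trivial bound $|K-1|\le 2$ combined with the same polynomial decay gives $O(L^{1-\alpha_\epsilon})$. Optimizing (take $L = k_n$ for $\alpha_\epsilon \le 2$ and let $L \to \infty$ for $\alpha_\epsilon > 2$) reproduces the three branches of $v_n$.

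For the stochastic term I would invoke the Fourier representation used in remark 2: $K((s_1-s_2)/k_n) = \int_{\mathbf{R}}\mathcal{F}K(z)\exp(2\pi\mathrm{i}z(s_1-s_2)/k_n)\,dz$, which rewrites $S_{ij} = \int_{\mathbf{R}}\mathcal{F}K(z)\bigl[A_i(z)\overline{A_j(z)} - \mathbf{E}A_i(z)\overline{A_j(z)}\bigr]\,dz$ with $A_i(z) := \sum_s \gamma_{is}\exp(2\pi\mathrm{i}zs/k_n)\epsilon_s$. Each $A_i(z)$ is a complex linear combination of errors, so \eqref{LinearComb} yields $\|A_i(z)\|_m = O(1)$ uniformly in $z$, and hence $\int|\mathcal{F}K(z)|\,dz < \infty$ already delivers $O(1)$ control. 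To sharpen this to the required rate, I would approximate each $\epsilon_s$ by its truncation $\mathbf{E}\epsilon_s|\mathcal{F}_{s,q}$ for a slowly growing $q$ (controlling the truncation error via assumption 2 as in \eqref{DeltaM}), decompose the resulting finite-dependence quadratic form into big and small blocks of length aligned with $k_n$, and apply Burkholder's inequality to each block sum exactly as in \eqref{DeltaM} and \eqref{LinearComb}. Markov's inequality on $\|S_{ij}\|_{m/2}$, together with the union bound over $p_1^2 = O(1)$ pairs and the bandwidth constraint $k_n = O(n^{\eta/2 - 3/(2m)})$ from assumption 7, then yields $\max_{i,j}|S_{ij}| = o_p(k_n n^{-1/4}\log^{-z}(n))$; the appearance of the same exponent $z = \max(9/2, 3\alpha_\epsilon/(2\alpha_\epsilon-2))$ as in lemma \ref{GaussianApprox} would arise from optimizing $q$ and the block sizes in the identical fashion.

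The main obstacle is the stochastic bound. Plain Minkowski plus Cauchy--Schwarz applied to the Fourier representation only gives $\|S_{ij}\|_{m/2} = O(1)$, which is far too weak. Extracting the factor $k_n n^{-1/4}\log^{-z}(n)$ requires genuinely exploiting both the spectral localization carried by $\mathcal{F}K$ and the dependence-adjusted norm of assumption 2 through a careful big-block/small-block martingale argument, in the same spirit as the proof of lemma \ref{GaussianApprox} but now applied to a quadratic rather than a linear functional of $\epsilon$.
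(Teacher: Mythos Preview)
Your treatment of the bias $B_{ij}$ is essentially the paper's: split the lag at order $k_n$, use the Lipschitz bound $|K(h/k_n)-1|\le C|h|/k_n$ for small lags and the polynomial covariance decay \eqref{Decay} for large lags, and read off the three regimes for $v_n$. That part is fine.

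The stochastic part, however, has a real gap, and the paper's route is both different and considerably simpler than what you sketch. The paper does \emph{not} use the Fourier representation; it organizes the centered quadratic form by lag. Writing $\zeta_{l,k}=\epsilon_l\epsilon_{l+k}-\sigma_{l,l+k}$, one has
\[
S_{ij}=\sum_{k\ge 0}(2-\mathbf{1}_{k=0})\,K(k/k_n)\sum_{l=1}^{n-k}\gamma_{il}\gamma_{j,l+k}\zeta_{l,k},
\]
and for each fixed $k$ the sequence $\{\zeta_{l,k}\}_l$ is $\mathcal{F}_{l+k}$--measurable with physical-dependence coefficients $\psi_{l,k,t,m/2}\le C(\delta_{\cdot,t-k,m}+\delta_{\cdot,t,m})$, so the same martingale/Burkholder argument behind \eqref{LinearComb} gives $\|\sum_l a_l\zeta_{l,k}\|_{m/2}\le C\sqrt{\sum_l a_l^2}$. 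The crucial observation is then purely algebraic:
\[
\sqrt{\sum_l \gamma_{il}^2\gamma_{j,l+k}^2}\ \le\ \gamma^*\sqrt{\textstyle\sum_l\gamma_{j,l+k}^2}\ \le\ \gamma^*\sqrt{C_\gamma},
\]
so each lag contributes $O(\gamma^*)$ in $L_{m/2}$, and summing $\sum_{k\ge 0}K(k/k_n)=O(k_n)$ gives $\|S_{ij}\|_{m/2}=O(k_n\gamma^*)$. Since $\gamma^*=o(n^{-1/4}\log^{-z}(n))$ is a \emph{hypothesis} (inherited from lemma~\ref{GaussianApprox}), Markov's inequality finishes. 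No truncation depth $q$, no big/small blocks, and no optimization are needed; in particular the exponent $z$ does not arise from balancing anything here---it is simply carried through from the standing assumption on $\gamma^*$.

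Your Fourier route does not obviously reach this: $A_i(z)=\sum_s\gamma_{is}e^{2\pi\mathrm{i}zs/k_n}\epsilon_s$ has coefficient $\ell^2$-norm $\Theta(1)$, not $O(\gamma^*)$, so \eqref{LinearComb} only yields $\|A_i(z)\|_m=O(1)$ and the product $A_i\overline{A_j}-\mathbf{E}A_i\overline{A_j}$ is $O(1)$ in $L_{m/2}$ uniformly in $z$. Integrating against $\mathcal{F}K$ then gives back $O(1)$, exactly the dead end you flag. The ``truncate to $\mathbf{E}\epsilon_s|\mathcal{F}_{s,q}$ and do big-block/small-block'' plan does not explain how a factor of $\gamma^*$ would ever be extracted from a spectral representation whose building blocks are $\ell^2$-normalized; the smallness comes from the \emph{product} structure of the coefficients $\gamma_{il}\gamma_{j,l+k}$, which is invisible once you pass to $A_i(z)$. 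The fix is to drop the Fourier picture and work lag-by-lag as above.
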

\begin{proof}
From \eqref{Decay} and section 0.9.7 in \cite{matrix}, there exists a constant $C>0$ such that
\begin{equation}
\begin{aligned}
\vert\sum_{s_1 = 1}^n\sum_{s_2=1}^n\left(1 - K\left(\frac{s_1 - s_2}{k_n}\right)\right)\gamma_{is_1}\gamma_{js_2}\sigma_{s_1s_2}\vert\\
\leq C\sum_{s_1 =1}^n\sum_{s_2 = 1}^n\vert\gamma_{is_1}\gamma_{js_2}\vert\times\frac{\left(1 - K\left(\frac{s_1 - s_2}{k_n}\right)\right)}{(1 + \vert s_1 - s_2\vert)^{\alpha_\epsilon}}\\
\leq 2C\sqrt{\sum_{s=1}^n\gamma^2_{is}}\times\sqrt{\sum_{s=1}^n\gamma^2_{js}}\times \sum_{s = 0}^{\infty}\frac{1 - K(s/k_n)}{1 + s^{\alpha_\epsilon}}
\end{aligned}
\end{equation}
$K$ is continuously differentiable, so
\begin{equation}
\begin{aligned}
\sum_{s = 0}^{\infty}\frac{1 - K(s/k_n)}{1 + s^{\alpha_\epsilon}}\leq \frac{\max_{x\in[0,1]}\vert K^{'}(x)\vert}{k_n}\sum_{s = 0}^{k_n}\frac{s}{1 + s^{\alpha_\epsilon}} + \sum_{s = k_n+1}^\infty\frac{1}{1 + s^{\alpha_\epsilon}}
\\
= O\left(\frac{1}{k_n}\left(1 + \int_{[1,k_n]}x^{1-\alpha_\epsilon}dx\right) + \int_{[k_n,\infty)}x^{-\alpha_\epsilon}dx\right)
\end{aligned}
\label{firHalf}
\end{equation}
we have $\max_{i,j = 1,...,p_1}\vert\sum_{s_1 = 1}^n\sum_{s_2=1}^n\left(1 - K\left(\frac{s_1 - s_2}{k_n}\right)\right)\gamma_{is_1}\gamma_{js_2}\sigma_{s_1s_2}\vert = O(v_n)$.

On the other hand, define $\zeta_{i,k} = \epsilon_i\epsilon_{i+k} - \sigma_{ii+k} = h_{i,i+k}(...,e_{i+k-1}, e_{i+k})$(i.e., $\zeta_{i,k}$ is $\mathcal{F}_{i+k}$ measurable), and define $\zeta_{i,k,t} = h_{i,i+k}(...,e_{i+k-t-1}, e_{i+k-t}^\dagger, e_{i+k-t+1},...,e_{i+k})$. Here $k\geq 0, i\geq 1, i+k\leq n, t\geq 0$. $e_i, e^\dagger_i, \sigma_{ij}$ are defined in section \ref{Prelim}. Then $\mathbf{E}\zeta_{i,k} = 0$ and
\begin{equation}
\begin{aligned}
\psi_{i,k,t,m/2} = \Vert\zeta_{i,k} - \zeta_{i,k,t}\Vert_{m/2} = \Vert\epsilon_i\epsilon_{i+k} - \epsilon_{i,t-k}\epsilon_{i+k,t}\Vert_{m/2}\\
\leq \Vert\epsilon_i - \epsilon_{i,t-k}\Vert_m \Vert\epsilon_{i+k}\Vert_m +\Vert\epsilon_{i,t-k}\Vert_m\Vert\epsilon_{i+k} - \epsilon_{i+k,t}\Vert_m\\
\Rightarrow \max_{i = 1,...,n-k}\psi_{i,k,t,m/2}\leq C\max_{i =1,...,n}\delta_{i, t-k, m} + C\max_{i =1,...,n}\delta_{i,t,m}\ \text{if $t-k\geq 0$}\\
\text{ and }C\max_{i =1,...,n}\delta_{i,t,m}\ \text{if $t-k<0$}
\end{aligned}
\end{equation}
Here $\epsilon_{i,t} =\epsilon_i$ if $t<0$ and $C$ is a constant. For fixed $i,j$, define

\noindent$N_{s,k,t} = \sum_{l = n-k+1-s}^{n-k}\gamma_{il}\gamma_{jl+k}\left(\mathbf{E}\zeta_{l,k}|\mathcal{F}_{l+k,t} - \mathbf{E}\zeta_{l,k}|\mathcal{F}_{l+k, t-1}\right)$, then $N_{s,k,t}$ is $\mathcal{F}_{n, s+t-1}$
measurable. Apply $\pi-\lambda$ theorem to the $\lambda-$ system
\begin{equation}
\{A\in\mathcal{F}_{n, s + t - 1}\Big|\mathbf{E}\left(\mathbf{E}\zeta_{n-k-s,k}|\mathcal{F}_{n-s, t}\right)\times\mathbf{1}_A = \mathbf{E}\left(\mathbf{E}\zeta_{n-k-s,k}|\mathcal{F}_{n-s, t - 1}\right)\times\mathbf{1}_A\}
\end{equation}
and the $\pi-$system $\{A_n\times A_{n-1}\times...\times A_{n-s-t+1}\}$, $A_i$ is generated by $e_i$. Then we know that $N_{s,k,t}, s = 1,2,...,n-k$ form a martingale for any given $k,t$. From \eqref{Deltas}, \eqref{DeltaM} and \eqref{LinearComb}
\begin{equation}
\begin{aligned}
\Vert\sum_{l = 1}^{n-k}\gamma_{il}\gamma_{jl+k}\zeta_{l,k}\Vert_{m/2}\leq \Vert\sum_{l = 1}^{n-k}\gamma_{il}\gamma_{jl+k}\mathbf{E}\zeta_{l,k}|\mathcal{F}_{l+k,0}\Vert_{m/2} + \sum_{t = 1}^\infty\Vert N_{n-k,k,t}\Vert_{m/2}\\
\leq C\sqrt{\sum_{l = 1}^{n-k}\gamma^2_{il}\gamma_{jl+k}^2\Vert\zeta_{l,k}\Vert_{m/2}^2} + C\sqrt{\sum_{l = 1}^{n-k}\gamma^2_{il}\gamma_{jl+k}^2}\times\sum_{t = 1}^\infty\max_{l = 1,...,n-k}\psi_{l,k,t, m/2}
\end{aligned}
\end{equation}
Here $C$ is a constant independent with $i,j$. Since $\Vert\zeta_{l,k}\Vert_{m/2}\leq \Vert\epsilon_i\Vert_m\times\Vert\epsilon_{i+k}\Vert_m+\vert\sigma_{ii+k}\vert$ and
\begin{equation}
\sum_{t = 1}^\infty \max_{l = 1,...,n-k}\psi_{l,k,t, m/2}\leq 2C\sum_{t=0}^\infty \max_{i =1,...,n}\delta_{i,t,m}\ \text{with $C$ a constant}
\end{equation}
we have
\begin{equation}
\max_{i,j = 1,...,p_1}\frac{\Vert\sum_{l = 1}^{n-k}\gamma_{il}\gamma_{jl+k}\zeta_{l,k}\Vert_{m/2}}{\sqrt{\sum_{l = 1}^{n-k}\gamma^2_{il}\gamma^2_{jl+k}}} = O(1)
\end{equation}
Therefore
\begin{equation}
\begin{aligned}
\Vert\sum_{s_1 = 1}^n\sum_{s_2 = 1}^n\gamma_{is_1}\gamma_{js_2}K\left(\frac{s_1-s_2}{k_n}\right)\times(\epsilon_{s_1}\epsilon_{s_2}-\sigma_{s_1s_2})\Vert_{m/2}\\
\leq 2\sum_{l = 0}^{n-1}K\left(\frac{l}{k_n}\right)\Vert\sum_{s = 1}^{n-l}\gamma_{is}\gamma_{js+l}\zeta_{s,l}\Vert_{m/2}\\
\leq C\sum_{l = 0}^\infty K\left(\frac{l}{k_n}\right)\times\max_{i =1,...,p_1, j = 1,...,n}\vert\gamma_{ij}\vert\ \text{with $C$ a constant}
\end{aligned}
\label{secHalf}
\end{equation}
For $K$ is decreasing on $[0,\infty)$,
\begin{equation}
\sum_{l = 1}^\infty K(l/k_n)\leq \sum_{l = 1}^\infty \int_{[l-1,l]}K(x/k_n)dx = k_n\int_{[0,\infty)}K(x)dx = O(k_n) \label{Ks}
\end{equation}
From \eqref{firHalf} and \eqref{secHalf}, we prove \eqref{Cov}.
\end{proof}

\begin{proof}[proof of theorem \ref{thmBootS}]
Recall $\mathbf{E}^*\cdot = \mathbf{E}\cdot|y$, $Prob^*(\cdot) = Prob(\cdot|y)$, and define $\varepsilon^*_i,i=1,...,n$ as in algorithm \ref{alg1}. $\forall a = (a_1,...,a_n)^T\in\mathbf{R}^n$, $\sum_{i = 1}^n a_i\varepsilon^*_i$ has normal distribution. Section 0.9.7 in \cite{matrix} and \eqref{Ks} implies
\begin{equation}
\mathbf{E}^*\vert\sum_{i=1}^na_i\varepsilon^*_i\vert^m = C\left(\sum_{i = 1}^n\sum_{j = 1}^n a_ia_jK\left(\frac{i-j}{k_n}\right)\right)^{m/2} = O\left(k_n^{m/2}\times \vertiii{a}_2^m\right)
\end{equation}
Here $C = \mathbf{E}|Y|^m$, $Y$ has normal distribution with mean $0$ and variance $1$. Therefore, $\forall \xi>0$,
\begin{equation}
\begin{aligned}
Prob^*\left(\max_{i = 1,...,p}\vert\sum_{j=1}^p\sum_{l=1}^n q_{ij}p_{lj}\left(\frac{\lambda_j}{\lambda^2_j+\rho_n} + \frac{\rho_n\lambda_j}{(\lambda^2_j+\rho_n)^2}\right)\epsilon^*_l\vert>\xi\right)\\
\leq \frac{1}{\xi^m}\sum_{i=1}^p\mathbf{E}^*\vert\sum_{j=1}^p\sum_{l=1}^n q_{ij}p_{lj}\left(\frac{\lambda_j}{\lambda^2_j+\rho_n} + \frac{\rho_n\lambda_j}{(\lambda^2_j+\rho_n)^2}\right)\epsilon^*_l\vert^m\\
=\frac{Ck_n^{m/2}}{\xi^m}\sum_{i=1}^p\left(\sum_{l = 1}^n\widehat{\epsilon}_l^2\left(\sum_{j = 1}^p q_{ij}p_{lj}\left(\frac{\lambda_j}{\lambda^2_j+\rho_n}+\frac{\rho_n\lambda_j}{(\lambda_j^2+\rho_n)^2}\right)\right)^2\right)^{m/2}\\
\leq \frac{Ck_n^{m/2}}{\xi^m}\times \max_{l=1,...,n}\vert\widehat{\epsilon}_l\vert^m\times \frac{2^m p}{\lambda^m_p}
\end{aligned}
\label{Qstar}
\end{equation}
Here $C$ is a constant. From theorem \ref{thm1}, $\max_{i = 1,...,n}\vert\widehat{\epsilon}_i\vert^m\leq 2^m\max_{i = 1,...,n}\vert\epsilon_i\vert^m + 2^m\max_{i = 1,...,n}\vert\widehat{\epsilon}_i-\epsilon_i\vert^m = O_p(n)$.
If $\widehat{\mathcal{N}}_{b_n} = \mathcal{N}_{b_n}$, \eqref{betaS} implies $\vertiii{\widehat{\beta}}_2\leq 2\sqrt{\sum_{i\in\mathcal{N}_{b_n}}(\widetilde{\beta}_i-\beta_i)^2} + 2\vertiii{\beta}_2 = O_p(n^{\alpha_\beta})$. Define $\widehat{s} = (\widehat{s}_1,...,\widehat{s}_p)^T = Q^T\widehat{\beta}$. If $\widehat{\mathcal{N}}_{b_n} = \mathcal{N}_{b_n}$, then for any given $0<a<1$, there exists a constant $C > 0$ such that $\vertiii{\widehat{\beta}}_2\leq Cn^{\alpha_\beta}$ and $\max_{l = 1,...,n}\vert\widehat{\epsilon}_i\vert^m\leq Cn$ with probability at least $1 - a$. From \eqref{Qstar}, $\forall c>0$,
\begin{equation}
\begin{aligned}
\widetilde{\beta}^*_i - \widehat{\beta}_i = -\rho^2_n\sum_{j=1}^p\frac{q_{ij}\widehat{s}_j}{(\lambda^2_j+\rho_n)^2} + \sum_{j=1}^p\sum_{l=1}^n q_{ij}p_{lj}\left(\frac{\lambda_j}{\lambda^2_j+\rho_n} + \frac{\rho_n\lambda_j}{(\lambda^2_j+\rho_n)^2}\right)\epsilon^*_l\\
\Rightarrow Prob^*\left(\max_{i=1,...,p}\vert\widetilde{\beta}^*_i - \widehat{\beta}_i\vert> 2cn^{-\nu_b}\right)\\
\leq Prob^*\left(\frac{n^{\nu_b}\rho^2_n\vertiii{\widehat{\beta}}_2}{\lambda^4_p}>c\right) + \frac{C^{'}}{c^m}\times\left(\sqrt{k_n}\times n^{2/m-\eta+\nu_b}\right)^m\\
\end{aligned}
\label{ConsisBoot}
\end{equation}
Here $C^{'}$ depends on $C$. If $\widehat{\mathcal{N}}_{b_n} = \mathcal{N}_{b_n}$,
\begin{equation}
\begin{aligned}
Prob^*\left(\widehat{\mathcal{N}}_{b_n}^*\neq \mathcal{N}_{b_n}\right)\leq Prob^*\left(\max_{i\not\in\mathcal{N}_{b_n}}\vert\widetilde{\beta}^*_i\vert > b_n\right) + Prob^*\left(\min_{i\in\mathcal{N}_{b_n}}\vert\widetilde{\beta}^*_i\vert\leq b_n\right)\\
\leq Prob^*\left(\max_{i\not\in\mathcal{N}_{b_n}}\vert\widetilde{\beta}^*_i - \widehat{\beta}_i\vert>b_n - \max_{i\not\in\mathcal{N}_{b_n}}\vert\widetilde{\beta}_i\vert\right) + Prob^*\left(\max_{i\in\mathcal{N}_{b_n}}\vert\widetilde{\beta}^*_i - \widehat{\beta}_i\vert\geq \min_{i\in\mathcal{N}_{b_n}}\vert\widetilde{\beta}_i\vert - b_n\right)
\end{aligned}
\end{equation}
From assumption 4 and \eqref{betaS}, with probability tending to $1$, $\max_{i\not\in\mathcal{N}_{b_n}}\vert\widetilde{\beta}_i\vert \leq \frac{1+c_b}{2}b_n$ and $\min_{i\in\mathcal{N}_{b_n}}\vert\widetilde{\beta}_i\vert\geq \frac{2b_n}{1+c_b}$. \eqref{ConsisBoot} implies $Prob^*\left(\widehat{\mathcal{N}}_{b_n}^*\neq \mathcal{N}_{b_n}\right) = o_p(1)$.

If $\widehat{\mathcal{N}}_{b_n}^* = \widehat{\mathcal{N}}_{b_n} =\mathcal{N}_{b_n}$, define $c_{ij}, i = 1,...,p_1, j = 1,...,p$ as in section \ref{Prelim},
\begin{equation}
\begin{aligned}
\widehat{\gamma}^*_i - \widehat{\gamma_i} = \sum_{j\in\mathcal{N}_{b_n}}m_{ij}(\widetilde{\beta}^*_j - \widehat{\beta}_j)\\
= -\rho_n^2\sum_{j = 1}^p \frac{c_{ij}\widehat{s}_j}{(\lambda^2_j+\rho_n)^2} + \sum_{j = 1}^p\sum_{l = 1}^n c_{ij}p_{lj}\left(\frac{\lambda_j}{\lambda^2_j+\rho_n} + \frac{\rho_n\lambda_j}{(\lambda^2_j+\rho_n)^2}\right)\epsilon^*_l
\end{aligned}
\label{bootAppli}
\end{equation}
Besides, $\widehat{\tau}^*_i = \tau_i$, and $\widehat{c}^*_{ij} = c_{ij}, i = 1,...,p_1,j=1,...,p$. Define $v_{il} = \frac{1}{\tau_i}\sum_{j=1}^pc_{ij}p_{lj}\left(\frac{\lambda_j}{\lambda_j^2+\rho_n} + \frac{\rho_n\lambda_j}{(\lambda^2_j+\rho_n)^2}\right), i = 1,...,p_1, l = 1,...,n$; and $z = (z_1,...,z_{p_1})^T$ such that $z_i = \sum_{l=1}^nv_{il}\epsilon^*_l$. We have $\mathbf{E}^*z_iz_j = \sum_{l_1 = 1}^n\sum_{l_2 = 1}^n v_{il_1}v_{jl_2}\widehat{\epsilon}_{l_1}\widehat{\epsilon}_{l_2}K\left(\frac{l_1 - l_2}{k_n}\right)$. Form section 0.9.7 in \cite{matrix}, lemma \ref{lemmaCov}, and theorem \ref{thm1}, $\forall i,j\in\mathcal{M}$,
\begin{equation}
\begin{aligned}
\vert\mathbf{E}^*z_iz_j  - \sum_{l_1=  1}^n\sum_{l_2 = 1}^n v_{il_1}v_{jl_2}\sigma_{l_1l_2}\vert\leq \vert\sum_{l_1=  1}^n\sum_{l_2= 1}^nv_{il_1}v_{jl_2}K\left(\frac{l_1-l_2}{k_n}\right)\widehat{\epsilon}_{l_1}(\widehat{\epsilon}_{l_2}-\epsilon_{l_2})\vert\\
+\vert\sum_{l_1=  1}^n\sum_{l_2= 1}^nv_{il_1}v_{jl_2}K\left(\frac{l_1-l_2}{k_n}\right)(\widehat{\epsilon}_{l_1}-\epsilon_{l_1})\epsilon_{l_2}\vert\\
+\vert\sum_{l_1 = 1}^n\sum_{l_2 = 1}^nv_{il_1}v_{jl_2}\epsilon_{l_1}\epsilon_{l_2}K\left(\frac{l_1-l_2}{k_n}\right) - \sum_{l_1= 1}^n\sum_{l_2 =1}^nv_{il_1}v_{jl_2}\sigma_{l_1l_2}\vert\\
\leq2\sum_{l = 0}^\infty K\left(\frac{l}{k_n}\right)\times\left(\sqrt{\sum_{l=1}^n v^2_{il}\widehat{\epsilon}_l^2}\sqrt{\sum_{l=1}^nv^2_{jl}(\widehat{\epsilon}_{l}-\epsilon_{l})^2}+\sqrt{\sum_{l=1}^nv^2_{il}(\widehat{\epsilon}_l-\epsilon_l)^2}\sqrt{\sum_{l=1}^nv^2_{jl}\epsilon^2_l}\right)\\
+\vert\sum_{l_1 = 1}^n\sum_{l_2 = 1}^nv_{il_1}v_{jl_2}\epsilon_{l_1}\epsilon_{l_2}K\left(\frac{l_1-l_2}{k_n}\right) - \sum_{l_1= 1}^n\sum_{l_2 =1}^nv_{il_1}v_{jl_2}\sigma_{l_1l_2}\vert\\
 = o_p(k_n\times n^{\frac{3}{2m}-\frac{\eta}{2}}) + o_p(k_n\times n^{-1/4}\log^{-z}(n)) + O_p(v_n) = o_p(1)
\end{aligned}
\label{ErrorTerm}
\end{equation}

$v_n$ is defined in lemma \ref{lemmaCov} and $z$ is defined in assumption 6. From lemma \ref{GaussianRes}, assumption 2, assumption 6, \eqref{LinearComb}, and \eqref{Vs}, $\sup_{x\in\mathbf{R}}\vert Prob^*\left(\max_{i\in\mathcal{M}}\vert z_i\vert\leq x\right) - H(x)\vert = o_p(1)$. Since

\begin{equation}
\begin{aligned}
\max_{i = 1,...,p_1}\frac{\rho_n^2}{\tau_i}\vert\sum_{j = 1}^p \frac{c_{ij}\widehat{s}_j}{(\lambda^2_j+\rho_n)^2}\vert
\leq \max_{i\in\mathcal{M}}\frac{\rho_n^2}{\tau_i}\sqrt{\sum_{j = 1}^p\frac{c^2_{ij}\lambda_j^2}{(\lambda^2_j+\rho_n)^2}}\sqrt{\sum_{j = 1}^p\frac{\widehat{s}^2_j}{\lambda^2_j(\lambda^2_j+\rho_n)^2}}\\
\leq \frac{\rho^2_n\vertiii{\widehat{\beta}}_2}{\lambda^3_p} = o_p(1)
\end{aligned}
\end{equation}
Define $t = \frac{\rho^2_n\vertiii{\widehat{\beta}}_2}{\lambda^3_p}$ and assume $\widehat{\mathcal{N}}_{b_n} = \mathcal{N}_{b_n}$,
\begin{equation}
\begin{aligned}
Prob^*\left(\max_{i = 1,...,p_1}\frac{\vert\widehat{\gamma}^*_i - \widehat{\gamma}_i\vert}{\widehat{\tau}^*_i}\leq x\right) - H(x)\leq Prob^*\left(\widehat{\mathcal{N}}^*_{b_n}\neq \mathcal{N}_{b_n}\right)\\
 + \left(Prob^*\left(\max_{i\in\mathcal{M}}\vert z_i\vert\leq x + t\right) - H(x+t)\right)
 + Ct(1 + \sqrt{\log(p_1)}+\sqrt{\vert\log(t)\vert})\\
 \text{and }
Prob^*\left(\max_{i = 1,...,p_1}\frac{\vert\widehat{\gamma}^*_i - \widehat{\gamma}_i\vert}{\widehat{\tau}^*_i}\leq x\right) - H(x)\geq -Prob^*\left(\widehat{\mathcal{N}}^*_{b_n}\neq \mathcal{N}_{b_n}\right)\\
+\left(Prob^*\left(\max_{i\in\mathcal{M}}\vert z_i\vert\leq x-t\right) - H(x-t)\right) - Ct(1 + \sqrt{\log(p_1)}+\sqrt{\vert\log(t)\vert})\\
\Rightarrow\sup_{x\in\mathbf{R}}\vert Prob^*\left(\max_{i = 1,...,p_1}\frac{\vert\widehat{\gamma}^*_i - \widehat{\gamma}_i\vert}{\widehat{\tau}^*_i}\leq x\right) - H(x)\vert\leq Prob^*\left(\widehat{\mathcal{N}}^*_{b_n}\neq \mathcal{N}_{b_n}\right)\\
+ \sup_{x\in\mathbf{R}}\vert Prob^*\left(\max_{i\in\mathcal{M}}\vert z_i\vert\leq x\right) - H(x)\vert
 + Ct(1 + \sqrt{\log(p_1)}+\sqrt{\vert\log(t)\vert}) = o_p(1)
\end{aligned}
\label{Distri}
\end{equation}

For the non-degenerated joint normal distribution is absolutely continuous with respect to Lebesgure measure, $\forall 0<\alpha<1$, $\exists c_{1-\alpha}\in \mathbf{R}$ such that $H(c_{1-\alpha}) = 1-\alpha$.
$\forall 0<\tau<\min(\alpha/2,(1-\alpha)/2)$, assign $x = c_{1-\alpha+\tau},c_{1-\alpha-\tau}$ in \eqref{Distri}, then $c_{1-\alpha-\tau}\leq c^*_{1-\alpha}\leq c_{1-\alpha+\tau}$ with probability tending to $1$.
Therefore, we have
$Prob\left(\max_{i =1,...,p_1}\frac{1}{\widehat{\tau}_i}\vert\widehat{\gamma}_i - \gamma_i\vert\leq c^*_{1-\alpha}\right)\leq Prob\left(c^*_{1-\alpha}>c_{1-\alpha+\tau}\right) + Prob\left(\max_{i =1,...,p_1}\frac{1}{\widehat{\tau}_i}\vert\widehat{\gamma}_i - \gamma_i\vert\leq c_{1-\alpha+\tau}\right)$;
and $Prob\left(\max_{i =1,...,p_1}\frac{1}{\widehat{\tau}_i}\vert\widehat{\gamma}_i - \gamma_i\vert\leq c^*_{1-\alpha}\right)\geq -Prob(c^*_{1-\alpha}<c_{1-\alpha-\tau}) + Prob(\max_{i =1,...,p_1}\frac{1}{\widehat{\tau}_i}\vert\widehat{\gamma}_i - \gamma_i\vert\leq c_{1-\alpha-\tau})$. By setting $\tau\to 0$, theorem \ref{Central} implies \eqref{thmBoot}.

\end{proof}
\end{document}